\theoremstyle{plain}
\newtheorem{theorem}[equation]{Theorem}
\newtheorem{lemma}[equation]{Lemma}
\theoremstyle{definition}
\newtheorem{definition}[equation]{Definition}
\newtheorem{example}[equation]{Example}
\theoremstyle{remark}
\newtheorem{remark}[equation]{Remark}
\newtheorem{claim}[equation]{Claim}
\numberwithin{equation}{section}
\newcommand{\cp}{{\rm cap}}
\newcommand{\RR}{{\mathbb{R}}}
\newcommand{\ZZ}{{\mathbb{Z}}}
\newcommand{\dist}{\operatorname{dist}}
\newcommand{\re}{\mathbb{R}}
\newcommand{\rn}{\mathbb{R}^n}
\newcommand{\reu}{\mathbb{R}^{n+1}_+}
\newcommand{\ree}{\mathbb{R}^{n+1}}
\newcommand{\N}{\mathbb{N}}
\newcommand{\dd}{\mathbb{D}}
\newcommand{\om}{\Omega}
\newcommand{\E}{\mathcal{E}}
\newcommand{\pom}{\partial\Omega}
\newcommand{\hm}{\omega}
\newcommand{\mres}[1]{|_{#1}}
\renewcommand{\emptyset}{\mbox{\textup{\O}}}
\DeclareMathOperator{\supp}{supp}
\DeclareMathOperator{\diam}{diam}
\DeclareMathOperator{\ang}{angle}
\begin{document}
\allowdisplaybreaks

\title[Rectifiability, interior approximation, and Harmonic Measure]{Rectifiability, interior approximation and Harmonic Measure}

\author{Murat Akman}
\address{Murat Akman
\\
Department of Mathematics 
\\
University of Connecticut
\\ 
Storrs CT 06269, USA}
\email{murat.akman@uconn.edu}

\author{Simon Bortz}

\address{Simon Bortz
\\
Department of Mathematics
\\
University of Minnesota
\\
Minneapolis, MN, USA} 
\email{bortz010@umn.edu}

\author{Steve Hofmann}

\address{Steve Hofmann
\\
Department of Mathematics
\\
University of Missouri
\\
Columbia, MO 65211, USA} \email{hofmanns@missouri.edu}

\author{Jos\'e Mar{\'\i}a Martell}

\address{Jos\'e Mar{\'\i}a Martell
\\
Instituto de Ciencias Matem\'aticas CSIC-UAM-UC3M-UCM
\\
Consejo Superior de Investigaciones Cient{\'\i}ficas
\\
C/ Nicol\'as Cabrera, 13-15
\\
E-28049 Madrid, Spain}
\email{chema.martell@icmat.es}

\date{June 1, 2017. \textit{Revised}: \today.}
\subjclass[2010]{31A15, 30C85, 42B37, 31B05, 28A75, 28A78, 49Q15} 

\keywords{Harmonic measure, rectifiability}

\thanks{The first and last authors acknowledge financial   support from the Spanish Ministry of Economy and Competitiveness,   through the ``Severo Ochoa'' Programme for Centres of Excellence in
  R\&D” (SEV-2015-0554). They also acknowledge that the research   leading to these results has received funding from the European   Research Council under the European Union's Seventh Framework
  Programme (FP7/2007-2013)/ ERC agreement no. 615112 HAPDEGMT. The second and third authors were supported by NSF grant DMS-1361701. The last author would like to express his gratitude to the University of Missouri-Columbia (USA), for its support and hospitality while he was visiting this institution.
\\
All authors wish to thank Matthew Badger, Svitlana Mayboroda,  and Tatiana Toro for their helpful comments and suggestions.}

\begin{abstract}

We prove a structure theorem for any $n$-rectifiable set $E\subset \mathbb{R}^{n+1}$, $n\ge 1$, satisfying a weak version of the lower ADR condition, and having locally finite $H^n$ ($n$-dimensional Hausdorff) measure. Namely, that $H^n$-almost all of $E$ can be covered by a countable union of boundaries of bounded Lipschitz domains contained in $\mathbb{R}^{n+1}\setminus E$.  As a consequence, for harmonic measure in the complement of such a set $E$,   we establish
a non-degeneracy condition  which amounts to 
saying that $H^n|_E$ is ``absolutely continuous" with respect to 
harmonic measure in the sense that any Borel subset of $E$ with strictly 
positive $H^n$ measure has strictly positive 
harmonic measure in some connected component of 
$\mathbb{R}^{n+1}\setminus E$. We also provide some counterexamples showing that our result for harmonic measure is optimal. Moreover, we show that if, in addition, a set $E$ as above is the boundary of a connected domain $\Omega \subset \mathbb{R}^{n+1}$ which satisfies an infinitesimal interior thickness condition,  then $H^n|_{\partial\Omega}$ is 
absolutely continuous (in the usual sense) with respect to  harmonic measure for $\Omega$. Local versions of these results are also proved: if just some piece of the boundary is $n$-rectifiable then we get the corresponding absolute continuity on that piece. As a consequence of this and recent results in \cite{AHM3TV}, we can decompose the boundary of any open connected set satisfying the previous conditions in two disjoint pieces: one that is $n$-rectifiable where Hausdorff measure is absolutely continuous with respect to harmonic measure and another purely $n$-unrectifiable piece  having vanishing harmonic measure.

\end{abstract}

\maketitle


\section{Introduction}
The connection between regularity of the boundary and properties of 
harmonic measure for a domain has been studied extensively; we recall a 
few relevant results. In \cite{Rfm} it was shown that if $\om \subset \re^2$ is simply 
connected with rectifiable boundary, then arc-length measure $\sigma$
and harmonic measure $\hm$ are 
mutually absolutely continuous. 
In contrast to the simply connected case, in \cite{BiJo} it was shown that there 
exists a domain in $\re^2$ which is the complement of a (uniformly) $1$-rectifiable set, 
for which $\hm$ fails to be absolutely continuous with respect to $\sigma$. 
A quantitative version of the result of \cite{Rfm}
was obtained in \cite{Lav}. 
In higher dimensions, it was shown that for Lipschitz domains \cite{D1}, and chord 
arc domains \cite{DJe},  harmonic measure and surface measure are quantitatively 
mutually absolutely continuous (in the sense of the Muckenhoupt $A_\infty$ condition). On the other hand, we know that the analogue of \cite{Rfm} fails to hold in higher dimensions. Wu \cite{W} and Ziemer \cite{Z} produced examples of topological 2-spheres in $\re^3$ with locally finite perimeter in which harmonic measure is fails to be absolutely continuous with respect to surface measure and surface measure fails to be absolutely continuous with respect to harmonic measure respectively. More recently, in \cite{Ba}, the author proved that surface measure $\sigma$ is absolutely continuous
with respect to $\hm$ in an NTA domain 
$\Omega$ with locally finite perimeter, thus replacing
the upper Ahlfors-David regularity (``ADR") condition on $\pom$ assumed in \cite{DJe} by a weaker
qualitative condition, namely, local finiteness of $\sigma$ 
(the lower ADR bound holds automatically for NTA domains, by the local isoperimetric inequality).
A refinement of the result in \cite{Ba} was obtained in \cite{Mo}, where it is shown
that for a uniform domain of locally finite perimeter, with rectifiable boundary 
satisfying the lower ADR condition,  surface measure is again absolutely continuous with respect to 
harmonic measure. Independently, \cite{ABHM} obtained this result (as well as its converse) assuming ``full''
(i.e., upper and lower) ADR.

Let us point out that in all of the results just mentioned (aside from the counter-example constructed in \cite{BiJo})
there is some strong connectivity hypothesis
(i.e., simple connectivity or the Harnack chain condition), 
and in higher dimensions a special quantitative
openness condition (the so-called ``corkscrew" condition).   Furthermore, in light of the 
Bishop-Jones example, 
{\it strong} connectivity of some sort seems to be necessary to 
obtain absolute continuity of 
harmonic measure with respect to surface measure.  Indeed, the 
Bishop-Jones domain itself is 
connected\footnote{Of course it is not simply connected, nor does it satisfy the 
Harnack chain condition.}, satisfies an interior
corkscrew condition,
and has a uniformly rectifiable boundary (in particular, arclength measure on the boundary is Ahlfors-David regular),
yet harmonic measure has positive mass on a set of arclength measure zero.
By contrast, in this paper we show, for a large class of open sets in 
$\ree, \,n\geq 1$, not necessarily connected,
with rectifiable boundaries and locally finite perimeter,
that harmonic measure cannot vanish on a set of positive surface measure.
More precisely,
in our main result, Theorem \ref{T1}, we show 
that if $E$ is a closed $n$-rectifiable set
satisfying some weak local version of the lower ADR condition, 
and on which
Hausdorff $H^n$ measure is locally finite, then the
surface measure $\sigma:=H^n|_E$ is ``absolutely  continuous" with 
respect to harmonic measure for
$\ree\setminus E$ in the sense
that any Borel subset of $E$ with positive surface 
measure has non-zero harmonic measure 
in at least one of the connected components of $\ree\setminus E$. 
Assuming in addition that $E=\pom$ is the boundary of a connected open set
$\Omega$ satisfying a weak version of an interior corkscrew condition, we prove in Theorem
\ref{T3} that $\sigma$ is absolutely continuous (in the usual sense) 
with respect to harmonic measure
for $\Omega$.   The weak corkscrew condition of Theorem \ref{T3} is an
 ``interior thickness'' condition 
which guarantees that at infinitesimal scales any ball centered at the boundary captures 
a non-degenerate portion of the set.
In particular, the 
domain constructed in \cite{BiJo},
for which harmonic measure fails to be absolutely continuous with respect to
$\sigma$, 
nonetheless has the property that $\sigma$ is absolutely 
continuous with respect to harmonic measure.

The proof of Theorem \ref{T1} 
relies on a structure theorem
(Theorem \ref{T2}), which allows us to
cover $H^n$-almost all of $E$ by a countable 
union of boundaries of bounded Lipschitz 
domains contained in $\ree\setminus E$.  A similar structure result
is involved in the proof of Theorem \ref{T3}.
The novelty of these structure results is of 
course the fact that the Lipschitz domains are subdomains of
$\ree\setminus E$ (or of $\Omega$ in the case of Theorem \ref{T3}), 
since $n$-rectifiability already entails coverage $H^n$-a.e. by Lipschitz graphs. 
This approximability by Lipschitz subdomains allows one to
use the maximum principle along with
Dahlberg's Theorem \cite{D1} to obtain the conclusions of Theorems \ref{T1} and \ref{T3}.
We note that the proofs in \cite{DJe} and \cite{Ba} (see also \cite{Azzam}) 
are also based on constructive approximation by Lipschitz subdomains, 
so in some sense the present paper may be viewed
as a qualitative version of those works.  Let us mention in addition that our methods have much in common
with the proof of McMillan's Theorem given in
 \cite[pp 207-210]{GM}.  Somewhat more precisely,
McMillan's Theorem says that for a simply connected domain
$\Omega \subset \RR^2$,  the set $K$ of ``cone points" of $\pom$ is rectifiable, and
harmonic measure $\hm$ and arc-length measure $\sigma$ are mutually absolutely continuous on $K$.
Here, $x\in \pom$ is a cone point if there is a truncated open cone $\Gamma$ with vertex
at $x$, such that $\Gamma \subset \Omega$.  Although simple connectivity is used 
strongly to establish the direction $\hm \ll \sigma$ in McMillan's theorem, the proof also 
contains an implicit structure
theorem for the cone set $K$, which does not really require simple connectivity.  This structure theorem
allows one to construct an open subset\footnote{The open set $\Omega'$ is 
a simply connected domain in the case that $\Omega$ is simply connected.}
$\Omega'\subset \Omega$, with a rectifiable boundary
such that $\pom'\cap\pom = E$, for any $E\subset K$.   Our structure theorem in higher dimensions
says that in the presence of our background hypotheses (including rectifiability of $E$), then
$\sigma$-a.e. point on $\pom$ is a cone point, and moreover, the cone set may be covered by
the union of boundaries of a countable collection of Lipschitz subdomains of $\ree\setminus E$.

In Section \ref{sec:counter} we present two examples of rectifiable sets which 
fail to satisfy either the locally finite perimeter or the local 
lower ADR assumptions, and for which surface measure is not absolutely continuous with 
respect to harmonic measure. 

Finally, in Appendix \ref{appendix}, we present some local versions of the previous results where absolute continuity holds in the rectifiable portions of $\pom$.  As an immediate  consequence of this and \cite{AHM3TV} we obtain that for any connected set whose boundary has $H^n$-locally finite measure and satisfying the mentioned weak lower ADR and ``interior thickness'' conditions, 
one can decompose its boundary in a good and a bad piece. The good piece is $n$-rectifiable and  Hausdorff measure is absolutely continuous with respect to harmonic measure. The bad piece is purely $n$-unrectifiable, and has vanishing harmonic measure.

\section{Main Results}

We now state our main result which gives that surface measure is absolutely continuous with respect to harmonic measure provided the set has locally finite surface measure, satisfies a weak  lower ADR condition and it is $n$-rectifiable (see Section \ref{sect:prelim} for the precise definitions):

\begin{theorem}\label{T1} 
Let $E \subset \ree$, $n\ge 1$, be a closed set with locally finite $H^n$-measure satisfying the ``weak lower ADR'' (WLADR) condition (see Definition \ref{d2} below).
Under these background hypotheses, if $E$ is $n$-rectifiable  (cf. Definition \ref{d1}) 
then $H^n|_E$ is ``absolutely continuous" with respect to harmonic measure 
for $\ree\setminus E$, in the sense that if $
F\subset E$ is a Borel set with 
$H^n(F)>0$, then $\omega^X(F)>0$ for some $X\in\ree\setminus E$.
\end{theorem}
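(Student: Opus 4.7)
My plan is to leverage the structure theorem (Theorem \ref{T2}) advertised in the introduction, together with Dahlberg's absolute continuity result \cite{D1} on Lipschitz domains and a maximum-principle comparison. Concretely, suppose $F \subset E$ is Borel with $H^n(F) > 0$. By Theorem \ref{T2}, there is a countable collection of bounded Lipschitz domains $\{\Omega_j\}_{j \geq 1}$ with each $\Omega_j \subset \mathbb{R}^{n+1}\setminus E$, whose boundaries cover $H^n$-almost all of $E$; that is,
\[
H^n\Bigl(E \setminus \bigcup_{j} \partial \Omega_j\Bigr) = 0.
\]
Countable subadditivity then forces some index $j$ with $H^n(F \cap \partial \Omega_j) > 0$. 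Fix this $j$, set $F_j := F \cap \partial \Omega_j$, and let $\mathcal{U}$ denote the connected component of $\mathbb{R}^{n+1}\setminus E$ containing $\Omega_j$.

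Next, I apply Dahlberg's theorem on the bounded Lipschitz domain $\Omega_j$: surface measure $H^n|_{\partial \Omega_j}$ and harmonic measure $\omega_{\Omega_j}^Y$ are mutually absolutely continuous for every $Y \in \Omega_j$. Since $F_j \subset \partial \Omega_j$ has positive surface measure, this yields $\omega_{\Omega_j}^Y(F_j) > 0$ for every $Y \in \Omega_j$.

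The final step is to transfer this positivity from $\Omega_j$ back to $\mathcal{U}$. Fix $Y \in \Omega_j$ and consider the bounded non-negative harmonic function $v(X) := \omega_{\mathcal{U}}^X(F)$ on $\mathcal{U}$; its restriction to $\Omega_j$ is still harmonic and non-negative. At a point $Z \in \partial \Omega_j$ that is regular for $\Omega_j$, the boundary values of $v$ satisfy $v \geq \chi_{F_j}$: namely, if $Z \in F_j \subset F \subset \partial \mathcal{U}$, then away from a polar exceptional set of $\partial \mathcal{U}$ we have $v(X) \to 1$ as $X \to Z$ from within $\Omega_j \subset \mathcal{U}$, while trivially $v \geq 0$ at all remaining points of $\partial \Omega_j$. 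The maximum principle on the bounded Lipschitz domain $\Omega_j$ then gives $v(Y) \geq \omega_{\Omega_j}^Y(F_j) > 0$, hence $\omega_{\mathcal{U}}^Y(F) > 0$, as required.

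Assuming Theorem \ref{T2}, the principal obstacle is executing the final maximum-principle comparison cleanly, since $\mathcal{U}$ need not be bounded and points of $\partial \mathcal{U}$ need not be regular for the Dirichlet problem. This is handled most naturally in the Perron--Wiener--Brelot framework: work with the Perron solution associated to the boundary datum $\chi_F$ on $\mathcal{U}$, observe that its lower-semicontinuous envelope on $\partial \Omega_j$ dominates $\chi_{F_j}$ quasi-everywhere (using regularity of $\partial \Omega_j$ as a Lipschitz boundary and the fact that exceptional sets for $\partial \mathcal{U}$ are polar and hence of zero $H^n$ measure when $n \geq 1$), and then invoke monotonicity of Perron solutions. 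Equivalently, one can exhaust $\mathcal{U}$ from inside by an increasing sequence of bounded regular subdomains each containing $\Omega_j$ and pass to the limit via monotone convergence of the corresponding harmonic measures.
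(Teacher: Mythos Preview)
Your proposal is correct and follows essentially the same route as the paper: invoke Theorem~\ref{T2} to find a bounded Lipschitz subdomain $\Omega_j\subset\ree\setminus E$ with $H^n(F\cap\partial\Omega_j)>0$, apply Dahlberg's theorem on $\Omega_j$, and then use the maximum principle to conclude $\omega^X(F)\ge \omega_{\Omega_j}^X(F\cap\partial\Omega_j)>0$ for $X\in\Omega_j$. The paper records this last domain-monotonicity inequality in one line as a standard consequence of the maximum principle, whereas you expand on it; your pointwise boundary-limit claim for $v(X)=\omega_{\mathcal U}^X(F)$ is a bit loose for an arbitrary Borel $F$, but the Perron/exhaustion justification you indicate (or the probabilistic argument via exit times) makes the inequality rigorous.
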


\begin{remark}
Let us note that in the previous result 
the ``absolute continuity'' property needs to be interpreted 
properly, as we are comparing one measure $\sigma$ with the collection of harmonic measures $\{\omega^X\}_{X\in\ree\setminus E}$. An equivalent formulation of the conclusion 
is that if $F\subset E$ is a Borel set with $\omega^X(F)=0$ for every 
$X\in\ree\setminus E$, then 
necessarily $H^n(F)=0$.   
One can restate this in terms of  genuine absolute continuity of
$H^n|_E$ with respect to an averaged harmonic measure: 
$$
H^n|_E\,\ll\, \widetilde{\omega}\,:=\,\sum_{k\ge 1} 2^{-k}\,\omega_k,
$$
where $\omega_k=\omega_{D_k}^{X_k}$ is the harmonic measure for the domain $D_k$ with some fixed pole $X_k\in D_k$, and $\{D_k\}_{k\ge 1}$ is an enumeration of the connected components of $\ree\setminus E$.
\end{remark}

Our main result will follow easily from the following structural theorem which says that under the same background hypotheses we can cover $E$ by boundaries of Lipschitz subdomains of $\ree\setminus E$.

\begin{theorem}\label{T2} 
Let $E \subset \ree$, $n\ge 1$, be a closed set with locally finite $H^n$-measure satisfying the WLADR condition. Then,  $E$ is $n$-rectifiable if and only if there exists a countable collection $\{\om_j\}_j$ of bounded Lipschitz domains with $\om_j \subset \ree \setminus E$ for every $j$, and a set $Z\subset E$   with $H^n(Z)=0$ such that
\begin{equation}\label{eq3}
E \subset Z \cup \Big(\bigcup_j \pom_j\Big).
\end{equation}
\end{theorem}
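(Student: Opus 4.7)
The easy direction is immediate: each Lipschitz boundary $\partial\Omega_j$ is $n$-rectifiable, and a countable union of $n$-rectifiable sets together with a $H^n$-null set is $n$-rectifiable; hence $E$ is $n$-rectifiable. So I focus on the forward direction, where the task is to build the $\Omega_j$'s from the rectifiable structure.

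The plan is to combine the measure-theoretic structure at $H^n$-a.e.\ point of a rectifiable set with the WLADR hypothesis to produce, at a.e.\ point of $E$, a \emph{cone of emptiness} in $\ree\setminus E$, and then to pigeonhole these cones into the required countable collection of Lipschitz subdomains. First, $n$-rectifiability gives a decomposition $E=N\cup\bigcup_{i\ge 1}E_i$ with $H^n(N)=0$, pairwise disjoint $E_i$, and each $E_i$ contained in a Lipschitz $n$-graph $\Gamma_i=\{(y,\phi_i(y)):y\in\rn\}$ in suitable coordinates. After discarding an additional $H^n$-null set, I may assume every remaining $x\in E_i$ is a point of differentiability of $\phi_i$ at which the approximate tangent plane $T_x$ of $E$ exists and coincides with the tangent plane of $\Gamma_i$, is a density-$1$ point of $E_i$ with respect to $H^n$ (so $H^n((E\setminus E_i)\cap B(x,r))=o(r^n)$), and is a point where WLADR furnishes a uniform lower bound $H^n(E\cap B(y,s))\gtrsim s^n$ for all $y\in E$ in a small neighborhood of $x$ and all $s$ below some positive scale.

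The heart of the proof is the following \emph{cone-point} property at each such good $x$: if $\nu_x$ is a unit normal to $T_x$, then for every $\alpha\in(0,1)$ there is $r_0=r_0(x,\alpha)>0$ so that, for both choices of sign and every $r\le r_0$, the open truncated cone
\[
C^\pm_{x,\alpha,r}:=\{y\in\ree:\pm(y-x)\!\cdot\!\nu_x>\alpha|y-x|,\ 0<|y-x|<r\}
\]
is disjoint from $E$. The proof is by contradiction. Since $\Gamma_i$ is tangent to $T_x$ at $x$, one has $\Gamma_i\cap C^\pm_{x,\alpha,r}=\emptyset$ once $r$ is small enough, so any hypothetical point $y\in E\cap C^\pm_{x,\alpha,r}$ must lie in $E\setminus\Gamma_i$. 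For such $y$ the distance to $\partial C^\pm_{x,\alpha,r}$ is comparable to $|y-x|$; the uniform WLADR lower bound in the neighborhood of $x$, applied at scale $s\sim|y-x|$, gives $H^n(E\cap B(y,s))\gtrsim |y-x|^n$, while $B(y,s)$ sits inside a mildly enlarged cone $C^\pm_{x,\alpha',2|y-x|}$. Hence $H^n(E\cap C^\pm_{x,\alpha',2|y-x|})\gtrsim |y-x|^n$, and this is incompatible with the approximate-tangent-plane decay $H^n(E\cap C^\pm_{x,\alpha',{r'}})=o({r'}^n)$ as ${r'}\to 0$.

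Given the cone-point property, each truncated cone $C^+_{x,\alpha,r}$ is itself a bounded Lipschitz subdomain of $\ree\setminus E$ with $x\in\partial C^+_{x,\alpha,r}$, so \eqref{eq3} holds pointwise. To extract a \emph{countable} collection, I would pigeonhole: partition each $E_i$ into Borel pieces $E_{i,k}$ on which the Lipschitz constant of $\phi_i$ is uniformly small on a fixed neighborhood, the cone-point parameters $(\alpha,r_0)$ and the WLADR scale are uniformly bounded below, and a common sign is used. Covering the projections of the $E_{i,k}$ to the base plane by a countable family of small boxes $Q_{i,k,\ell}\subset\rn$ of side-length $\ll r_0$, one forms the Lipschitz bump
\[
\Omega_{i,k,\ell}=\{(y,t):y\in Q_{i,k,\ell},\ \phi_i(y)-h_{i,k}<t<\phi_i(y)\}
\]
(with the inequality reversed for the opposite sign), where $h_{i,k}>0$ is chosen small enough that the uniform cone-point property forces $\Omega_{i,k,\ell}\cap E=\emptyset$. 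Each $\Omega_{i,k,\ell}$ is a bounded Lipschitz subdomain of $\ree\setminus E$ whose top boundary contains the portion of $E_{i,k}$ lying over $Q_{i,k,\ell}$, and the countable collection $\{\Omega_{i,k,\ell}\}$ together with $Z=N\cup\{\text{non-good points}\}$ verifies~\eqref{eq3}. The main obstacle lies precisely in this last, quantitative step: the cone-point property is only infinitesimal, so the pigeonhole must be sharp enough that a single bump of positive height $h_{i,k}$ works uniformly over $E_{i,k}$ without sweeping any stray point of $E\setminus\Gamma_i$ into its interior. The WLADR hypothesis enters exactly to preclude such stray accumulation, via the cone-point argument above.
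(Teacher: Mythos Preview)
Your cone-point argument (that at $H^n$-a.e.\ good point $x\in E$ there is a two-sided truncated open cone with vertex at $x$ disjoint from $E$) is correct and is essentially the paper's Lemma~\ref{L2}. The pigeonholing on cone direction, aperture, height, and WLADR parameters is also in the same spirit as the paper. The difference---and the gap---lies in how you manufacture the Lipschitz subdomains.

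Your bump $\Omega_{i,k,\ell}=\{(y,t):y\in Q,\ \phi_i(y)-h<t<\phi_i(y)\}$ is defined via the \emph{ambient} Lipschitz graph $\Gamma_i$, not via $E$ itself. You assert that ``the uniform cone-point property forces $\Omega_{i,k,\ell}\cap E=\emptyset$'', but this does not follow. The cones you possess are based only at points of $E_{i,k}\cap\pi^{-1}(Q)$, and their union need not cover the bump: if $y\in Q$ is \emph{not} the projection of any point of $E_{i,k}$, a stray point $z=(y,t)\in E$ with $t$ just below $\phi_i(y)$ can lie outside every such cone (for a nearby $x=(x',\phi_i(x'))\in E_{i,k}$ with $\phi_i(x')<\phi_i(y)$ the point $z$ may sit \emph{above} $x$ in the $\nu_x$-direction, hence outside $C^-_x$). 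The WLADR/density argument you invoke does not yield a contradiction either: it gives $H^n\big((E\setminus\Gamma_i)\cap B(z,r)\big)\gtrsim r^n$ with $r\sim \phi_i(y)-t$, which is perfectly compatible with $H^n\big((E\setminus\Gamma_i)\cap B(x,R)\big)=o(R^n)$ once $r\ll R\sim\operatorname{diam}Q$. No choice of $h>0$ eliminates this, because $r$ can be arbitrarily small.

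The paper sidesteps the issue by never referring to the ambient graph in the construction. After pigeonholing to a set $G$ with uniform cone direction, height $h_0$, and aperture, it slices $G$ by horizontal slabs $S_\ell$ of thickness $h_0/10$, and over each small dyadic cube $Q$ in the base defines $\Omega_{Q,\ell}$ as the \emph{union of the truncated cones} $\Gamma_{h_0}(x)$ for $x\in F_\ell\cap\pi^{-1}(Q)$, capped at a fixed height. This set is disjoint from $E$ \emph{by construction} (each cone is), and it is a bounded Lipschitz domain because it is star-shaped with respect to a fixed ball: each truncated cone is convex and contains a common ball centered above the center of $Q$. Every vertex $x\in F_\ell\cap\pi^{-1}(Q)$ lies on $\partial\Omega_{Q,\ell}$, since $x$ is on the boundary of its own cone and cannot be interior to any other (the cones miss $E\ni x$). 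Replacing your graph-bump by this union-of-cones domain closes the gap with no further change to your outline.
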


As mentioned above, the 
innovation in Theorem \ref{T2} is  the fact that 
each $\Omega_j$ is contained in $\ree\setminus E$, otherwise 
this would be the standard covering of a rectifiable set 
by Lipschitz graphs.   
Theorem \ref{T1} will
follow almost directly from Theorem \ref{T2} 
and Dahlberg's Theorem for Lipschitz domains (Theorem \ref{L3}),  
by the maximum principle. 
Additionally, one may view Theorem \ref{T2} as 
a qualitative version of the results in \cite{BH1}.

Our next results deals with the case on which one starts with a domain $\Omega$ and seeks to approximate its boundary by interior Lipschitz subdomains. This in particular leads to obtain that surface measure is absolutely continuous with respect to harmonic measure for $\Omega$.

\begin{theorem}\label{T3}
Let $\Omega\subset\ree$, $n\ge 1$, be an open connected set, whose boundary $\pom$ has locally finite $H^n$-measure. Assume that $\pom$  satisfies the WLADR condition (see  Definition \ref{d2} below). Assume further that $H^n(\pom\setminus\partial_+\om)=0$ where $\partial_+\om$ is the Interior Measure Theoretic Boundary (cf. Definition \ref{d3}). 
Then,  $\pom$ is $n$-rectifiable   if and only if there exists a countable collection $\{\om_j^{\rm int}\}_j$ of bounded Lipschitz domains with $\om_j^{\rm int} \subset \Omega$ for every $j$, and a set $Z\subset \pom$  with $H^n(Z)=0$ such that
\begin{equation}\label{eq3:int}
\pom \subset Z \cup \Big(\bigcup_j \pom_j^{\rm int}\Big).
\end{equation}
As a consequence, if $\pom$ is $n$-rectifiable (and $\om$ satisfies the background hypothesis above)$H^n|_{\pom}$ is absolutely continuous with respect to $\hm$, where $\hm=\hm^X$ is the harmonic measure for $\om$ with some (or any) fixed pole  $X\in\Omega$.
\end{theorem}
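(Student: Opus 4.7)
My plan is to follow the strategy of Theorem \ref{T2}, with the essential modification that the approximating Lipschitz subdomains must lie inside $\Omega$ itself, rather than merely inside $\ree\setminus\partial\Omega$. The backward implication is immediate, since each $\partial\Omega_j^{\rm int}$, being locally a Lipschitz graph, is $n$-rectifiable, and $n$-rectifiable sets form a $\sigma$-ideal.

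For the forward implication, assume $\partial\Omega$ is $n$-rectifiable. Then at $H^n$-a.e.\ $x\in\partial\Omega$ three properties coexist: (i) an approximate tangent $n$-plane $P_x$ exists, by $n$-rectifiability; (ii) $x\in\partial_+\Omega$, so $\Omega$ has positive lower Lebesgue density at $x$ in one of the two normal half-spaces determined by $P_x$; and (iii) the WLADR condition holds at $x$. From (i)--(iii) I would extract a truncated open cone $\Gamma_x$ with vertex at $x$, axis in the half-space selected by (ii), and $\Gamma_x\subset\Omega$ for some scale $r_x>0$. Once such interior cones are in hand, the Lipschitz-subdomain construction from the proof of Theorem \ref{T2} applies verbatim: around each cone point one builds a bounded Lipschitz domain $\Omega_x^{\rm int}\subset\Omega$ whose boundary contains a relative neighborhood of $x$ in $\partial\Omega$, and a Vitali-type covering extraction then yields the countable subfamily $\{\Omega_j^{\rm int}\}$ satisfying \eqref{eq3:int}.

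The principal obstacle is the production of the interior cone. The hypothesis $x\in\partial_+\Omega$ is measure-theoretic---it only guarantees positive \emph{lower} Lebesgue density of $\Omega$ near $x$---which a priori allows $\Omega$ to be riddled with small holes. The approximate flatness supplied by $P_x$, together with the WLADR control that prevents $\partial\Omega$ from oscillating wildly into the chosen half-space, must be combined with (ii) to upgrade the density estimate to the genuine geometric inclusion $\Gamma_x\subset\Omega$. This is the only point at which the argument departs from that of Theorem \ref{T2}.

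The absolute continuity consequence then follows by Dahlberg's theorem and the maximum principle. Let $F\subset\partial\Omega$ be Borel with $\omega^X(F)=0$ for some (hence every, by connectedness of $\Omega$ and Harnack's inequality) $X\in\Omega$. If $H^n(F)>0$, then by \eqref{eq3:int} some $F_j:=F\cap\partial\Omega_j^{\rm int}$ has $H^n(F_j)>0$, and Dahlberg's theorem applied in the Lipschitz domain $\Omega_j^{\rm int}$ forces $\omega_j^Y(F_j)>0$ for $Y\in\Omega_j^{\rm int}$. The function $u(X):=\omega^X(F)$ is nonnegative and harmonic in $\Omega\supset\Omega_j^{\rm int}$, with nontangential boundary values on $\partial\Omega_j^{\rm int}$ dominating $\chi_{F_j}$: they are continuous and $\geq 0$ on $\partial\Omega_j^{\rm int}\cap\Omega$, and equal $\chi_F\geq\chi_{F_j}$ on $\partial\Omega_j^{\rm int}\cap\partial\Omega$. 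The maximum principle therefore gives $\omega^Y(F)=u(Y)\geq\omega_j^Y(F_j)>0$, contradicting $\omega^Y(F)=0$.
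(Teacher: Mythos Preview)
Your plan matches the paper's approach, with two points worth sharpening. First, the paper's device for producing the interior cone (Lemma \ref{R1}) is cleaner than your sketch: by Lemma \ref{L2} \emph{both} cones $\Gamma^\pm$ at $x$ already miss $\partial\Omega$; since together they fill all but an arbitrarily small fraction of $B(x,r)$ as the aperture opens toward $\pi$, the condition $\limsup_{r\to0}|B(x,r)\cap\Omega|/|B(x,r)|>\epsilon$ (note: this is \emph{upper} density, a $\limsup$, not the ``lower'' density you wrote) forces $\Omega$ to meet one of the two cones, and connectedness then gives that cone $\subset\Omega$. No half-space is selected in advance from the density. Second, the paper does not use a Vitali-type extraction, and your claim that $\partial\Omega_x^{\rm int}$ contains a \emph{relative neighborhood} of $x$ in $\partial\Omega$ is neither proved nor needed---nearby boundary points may have quite different cone parameters. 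Instead, just as in the proof of Theorem \ref{T2}, one stratifies the cone-point set by a countable index (direction $\nu_m$; lower bound $2^{-k}$ on $c_x,\rho_x,r_x$ and now also on $\tau_x$; vertical slice $S_\ell$; dyadic cube $Q$ for the horizontal projection) and builds, for each stratum, one star-shaped domain $\Omega_{Q,\ell}^+$ and one $\Omega_{Q,\ell}^-$ as unions of the corresponding cones. Each is connected and disjoint from $\partial\Omega$; since some cone in the stratum lies in $\Omega$, at least one of $\Omega_{Q,\ell}^\pm$ lies entirely in $\Omega$, and its boundary contains every vertex in the stratum. This yields the countable family $\{\Omega_j^{\rm int}\}$ directly. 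Your absolute-continuity step via Dahlberg and the maximum principle is correct and is exactly what the paper does.
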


\begin{remark}
The connectivity assumption here is merely cosmetic.  If $\om$ were an open set rather than a domain the conclusion would be that $H^n|_{\pom}$ is absolutely continuous with respect to $\hm$ in the sense that if $F\subset \pom$ is a Borel set with $H^n(F)>0$ then $\omega^X(F)>0$ for some $X\in \Omega$ (or any $X$ in the same connected component).
\end{remark}

\begin{remark}

Note that \eqref{eq3:int} implies that  $\pom\setminus\partial_+\om\subset Z$ and hence 
condition $\sigma(\pom\setminus \partial_+\om) = 0$ is necessary for the approximation of $\Omega$ by interior Lipschitz subdomains.
\end{remark}

As a corollary of Theorem \ref{T3} and the results in \cite{AHM3TV} we have the following characterization of $n$-rectifiability in terms of properties of harmonic measure. 

\begin{theorem}\label{T5}
Let $\Omega\subset\ree$, $n\ge 1$, be an open connected set, whose boundary $\pom$ has locally finite $H^n$-measure. Assume that $\pom$  satisfies the WLADR condition and that the Interior Measure Theoretic Boundary has full $H^n$-measure.
Then $\pom$ is $n$-rectifiable if and only if $H^n|_{\pom}$ is absolutely continuous with respect to $\hm$, where $\hm=\hm^X$ is the harmonic measure for $\om$ with some (or any) fixed pole  $X\in\Omega$.
\end{theorem}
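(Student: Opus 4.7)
The plan is to derive Theorem \ref{T5} by combining Theorem \ref{T3} with the decomposition result from \cite{AHM3TV}, as foreshadowed in the abstract. The direction that $n$-rectifiability of $\pom$ yields $H^n|_{\pom}\ll\hm$ is exactly the conclusion of Theorem \ref{T3}, so only the converse requires work.

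Suppose then that $H^n|_{\pom}\ll\hm$. Since $\pom$ has locally finite $H^n$-measure, the classical Federer-type decomposition of rectifiable versus purely unrectifiable sets writes
\begin{equation*}
\pom \,=\, G \,\sqcup\, B,
\end{equation*}
with $G$ an $n$-rectifiable set (in the sense of Definition \ref{d1}) and $B$ purely $n$-unrectifiable. The desired conclusion is that $H^n(B)=0$, which by definition promotes $\pom$ to an $n$-rectifiable set. The key input to supply is therefore $\hm(B)=0$, after which the assumption $H^n|_{\pom}\ll\hm$ immediately gives $H^n(B)=0$ and thus the result.

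To obtain $\hm(B)=0$ I would invoke \cite{AHM3TV}, which, under exactly the background hypotheses of the theorem (connectedness of $\Omega$, locally finite $H^n$-measure of $\pom$, WLADR, and $H^n$-a.e.\ coverage of $\pom$ by the Interior Measure Theoretic Boundary $\partial_+\om$), asserts that harmonic measure $\hm$ is concentrated on an $n$-rectifiable subset of $\pom$; equivalently, the purely $n$-unrectifiable part carries no harmonic measure. Combining this with the preceding paragraph finishes the proof.

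The main obstacle I expect is purely bibliographic/technical: verifying that the version of the result in \cite{AHM3TV} one wishes to quote really operates under the weak lower ADR and interior thickness hypotheses assumed here, rather than under stronger full ADR or corkscrew conditions. Once the correct formulation from \cite{AHM3TV} is identified, no additional analytic machinery is required, and the argument reduces to the essentially one-line measure-theoretic consequence above.
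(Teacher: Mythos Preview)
Your forward direction via Theorem~\ref{T3} is correct and matches the paper. For the converse, however, you take an unnecessary detour and misstate the input from \cite{AHM3TV}.

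The result you invoke from \cite{AHM3TV} is not that harmonic measure is automatically concentrated on an $n$-rectifiable set under the background hypotheses alone. The relevant statement (see how \cite[Theorem~1.1(b)]{AHM3TV} is used in the proof of Theorem~\ref{THRM2} in the appendix) is: if $F\subset\pom$ satisfies $H^n|_F\ll\hm$, then $F$ is $n$-rectifiable. This requires the absolute-continuity hypothesis on $F$; it is not an unconditional rectifiability statement for $\hm$. Consequently your step ``$\hm(B)=0$ by \cite{AHM3TV}'' does not follow as written.

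Once you have the correct statement in hand, the Federer decomposition becomes superfluous. The paper's (implicit) argument for the converse is a single line: since by hypothesis $H^n|_{\pom}\ll\hm$, apply \cite[Theorem~1.1(b)]{AHM3TV} with $F=\pom$ to conclude directly that $\pom$ is $n$-rectifiable. Your route could be salvaged by applying the same result to $B$ instead (from $H^n|_B\ll\hm$ one gets that $B$ is rectifiable, hence $H^n(B)=0$ since $B$ is also purely unrectifiable), but this is just a circuitous version of the one-line argument.
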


Let us point out that this equivalence has been shown in \cite{ABHM} in $\ree$, $n\ge 2$, under stronger assumptions (namely,
for uniform domains of locally finite perimeter with boundary satisfying the lower ADR condition).

\section{Preliminaries}\label{sect:prelim}

Throughout the paper we work in $\ree$, $n\ge 1$. $H^n$ will denote the $n$-dimensional Hausdorff measure.
We will work with closed sets $E\subset\ree$ in which case we write $\sigma:=H^n|_E$. We will also consider open sets $\Omega\subset\ree$, not necessarily connected unless otherwise specified. In such case we shall write $\sigma:=H^n|_{\pom}$.

\begin{definition}[\bf Rectifiability]\label{d1} 
A set $E \subset \ree$, $n\ge 1$, is called {\bf $n$-rectifiable} if there exist $n$-dimensional Lipschitz maps $f_i : \rn \to \ree$ such that 
\begin{equation}
H^n\Big(E \setminus \bigcup_i f_i(\rn)\Big) = 0.
\end{equation}
\end{definition}

We next introduce a notion that is weaker than the well-known lower ADR condition:

\begin{definition}[\bf Weak Lower ADR (WLADR)]\label{d2} \
Let $E \subset \ree$, $n\ge 1$,  be a closed set with locally finite $H^n$-measure. We say the surface measure $\sigma := H^n \mres{E}$ satisfies the {\bf Weak Lower Ahlfors-David regular condition (WLADR)} if $\sigma(E\setminus E_*)=0$ where $E_*$ is the relatively open set
\begin{equation}
\label{eq:local-lower-ADR}
E_*=\left\{ 
x\in E: \inf_{
\substack{y\in B(x,\rho)\cap E
\\
0<r<\rho
}} \frac{H^n(B(y,r)\cap E)}{r^n}>0, \ \mbox{for some }\rho>0
\right\}.
\end{equation}
\end{definition}

Let us recall that $E$ is lower ADR if there exists a constant $c>0$ such that $\sigma(B(x,r)\cap E) \ge cr^n$ for all $x \in E$ and $r \in (0,\diam(E))$. Note that this is clearly stronger than WLADR. Also, if $E$ satisfies the lower ADR condition ``locally for small scales'' (that is, if for every $R$ the lower ADR condition holds on $E\cap B(0,R)$, albeit with constants depending on $R$, for all $0<r<r_R$ for some $r_R<R$) then WLADR holds. The WLADR condition says that for $\sigma$-a.e.~$x\in E$ there exists a small ball $B_x$ center at $x$ and a constant $c_x$ such that the lower ADR condition holds for all balls $B'\subset B_x$ with constant $c_x$. 
This in particular allows us to deal with cusps where the lower ADR condition fails as the radius approaches 0 (see next remark). Let us finally observe that WLADR is strictly stronger than the set having positive lower density  $H^n$-a.e.

\begin{remark} \label{remark:Omega-alpha}
There are examples of ``nice'' domains whose surface measure satisfies the WLADR condition but the lower ADR and/or the lower ADR condition ``locally for small scales'' fail. Let $\om \in \ree$, $n \ge 2$, be the domain above the graph of the function $|\cdot|^\alpha$ with $\alpha \in (0,\infty) \setminus \{1\}$, that is , $\om_\alpha = \{ (x',x_{n+1}) \in \rn \times \re : x_{n+1} > |x'|^{\alpha}\}$. When $\alpha>1$ the lower ADR condition fails at $0$ since $\sigma(B(0,r)\cap \pom_\alpha)/r^n\to 0$ as $r\to\infty$. However, it is easy to see that the lower ADR condition ``locally for small scales'' and hence WLADR follows. For $\alpha<1$, there is a cusp at $0$, and  one can see that the lower ADR condition at small scales fails since $\sigma(B(0,r)\cap \pom_\alpha)/r^n\to 0$ as $r\to 0^+$. However, one can easily obtain the  $(\pom_\alpha)_*=\pom_\alpha\setminus \{0\}$ (recall the notation in Definition \ref{d2}) and hence the  WLADR condition holds. See Figure \ref{figure}.
\end{remark}

\begin{figure}[h!]
\centering
\begin{tikzpicture}
\begin{scope}[shift={(-3,0)}]
\begin{scope}
\clip (0,0) circle (.8cm);
\draw[ultra thick, red] (-2.5,2.5) arc (90:0:2.5cm)  (0,0) arc (180:100:2.5cm) --(-2.5,2.5);
\end{scope}

\fill[thick, gray, opacity=.5] (-2.5,2.5) arc (90:0:2.5cm)  (0,0) arc (180:100:2.5cm) --(-2.5,2.5);

\draw (0,0) circle (.5pt) node[below] {$0$};
\draw[gray, opacity=1] (0,0) circle (.8cm);
\draw[dashed, gray, opacity=1] (0,0)--($(0,0)!.8cm!(-1,-1)$) node[near start, left] {\small $r$};
\node at (0,1.9) {$\Omega_{\alpha}$};
\node[right, red] at (0.1,.2) {$B(0,r)\cap\partial\Omega_{\alpha}$};
\node[below] at (0,-1){$\Omega_\alpha$ when $\alpha<1$ for $r$ small};
\end{scope}
\begin{scope}[shift={(3,0)}]
 \begin{scope}
 \clip (0,0) circle (1.9cm);
 \draw[ultra thick, red, domain=-1.25:1.25,smooth] plot (\x,{(abs(\x))^4});
 \end{scope}
        \fill[gray,opacity=.5, domain=-1.25:1.25,smooth] plot (\x,{(abs(\x))^4});
        \node[right, red] at (1,.6) {$B(0,r)\cap\partial\Omega_{\alpha}$};
\node at (0,1.15) {$\Omega_{\alpha}$};
\draw (0,0) circle (1pt) node[below] {$0$};

\draw[gray, opacity=1]  ([shift=(-10:1.9cm)]0,0) arc (-10:190:1.9cm);

\draw[dashed, gray, opacity=1] (0,0)--([shift=(190:1.9cm)]0,0) node[midway, below] {$r$};

\node[below] at (0,-1){$\Omega_\alpha$ when $\alpha>1$ for $r$ large};
\end{scope}

\end{tikzpicture}
\caption{\ }\label{figure}
\end{figure}

Our next definition introduces a subset of the boundary of a set in the spirit of the measure theoretic boundary (see \cite[Section 5.8]{EG}) but, here we only look at the infinitesimal behavior from the ``interior''.

\begin{definition}[\bf Interior Measure Theoretic Boundary]\label{d3} 
Given a set $\Omega \subset \ree$, the {\bf Interior Measure Theoretic Boundary} $\partial_+\om$ is defined as 
\begin{equation}\label{e100}
\partial_+\om:=
\left\{
x\in \pom:\ \limsup_{r \to 0^{+}} \frac{|B(x,r) \cap \Omega|}{|B(x,r)|} > 0
\right\}
\end{equation}
\end{definition}

Let us note that if an open set $\Omega$ satisfies a local (interior) corkscrew condition at $x\in\pom$, that is, if  there is $0<r_x<\diam(\pom)$ and $0<c_x<1$ such that for every $0<r<r_x$ there exists $B(X_{B(x,r)}, c_x\,r)\subset B(x,r)\cap \Omega$ then clearly $x\in \partial_+\Omega$.

\begin{remark}
Consider the domains $\Omega_\alpha$ as in Remark \ref{remark:Omega-alpha}. If $\alpha>1$, $\Omega_\alpha$ does not have interior corkscrews (for very large scales, the domain is too narrow and one cannot insert a ball of comparable radius), but it does have interior corkscrews for small scales. Hence $\partial_+\Omega=\pom$.
When $\alpha<1$, one can see that $\partial_+\Omega=\pom\setminus\{0\}$: with the exception of $0$ there are interior corkscrews for small scales, but at $0$ not only corkscrews fail to exist but also the $\limsup$ becomes 0.
\end{remark}

\begin{definition}[\bf Truncated Cones]\label{d4} If $z=(z',z_{n+1}) \in \ree$ then we write $\Gamma_{h,\alpha}(z)$ for the open truncated cone with vertex at $z$, with axis $e_{n+1}$, in the direction $e_{n+1}$, with height $h>0$ and with aperture $\alpha\in (0,\pi)$, that is,
$$
\Gamma_{h,\alpha}(z):=\big\{y = (y', y_{n+1}):\ |y' - z'|<(y_{n+1}-z_{n+1})\,\tan(\alpha/2),\ y_{n+1}\in(z_{n+1}, z_{n+1}+h)\big\}.
$$
We will often suppress $\alpha$ as what will matter is that the aperture is some fixed positive number.  We will sometimes use the notation $\Gamma^+$ (in place of $\Gamma$) and $\Gamma^-$ for the truncated cones in the direction $e_{n+1}$ and $-e_{n+1}$ respectively, this will only be necessary for the proof of Theorem \ref{T3}.
\end{definition}

The following result can be found in \cite[Theorem 15.11]{M} with the additional assumption that $H^n(E)<\infty$, however since the  $n$-linear approximability is a local property it immediately extends to any $E$ having locally finite $H^{n}$-measure.

\begin{theorem}[{$n$-linear approximability, \cite[Theorem 15.11]{M}}]\label{L1} Let $E \subset \ree$ be a $n$-rectifiable set such that $H^n \mres{E}$ is locally finite.   Then there exists $E_0\subset E$ with $H^n(E_0)=0$ such that if $x \in E\setminus E_0$ the following holds:  for every $\eta > 0$ there exist positive numbers $r_x=r_x(\eta)$ and $\lambda_x=\lambda_x(\eta)$ and a $n$-dimensional affine subspace $P_x=P_x(\eta)$ such that for all $0 < r < r_x$
\begin{equation}\label{e4}
H^n(E \cap B(y,\eta r))\ge \lambda_x r^n, \quad \text{for } y \in P_x\cap B(x,r)
\end{equation}
and
\begin{equation}\label{e5}
H^n\big((E \cap B(x,r))\setminus P_x^{(\eta r)}\big) < \eta r^n.
\end{equation}
Here $P_x^{(\eta r)}$ is an $\eta r$-neighborhood of $P_x$, that is, $P_x^{(\eta r)} = \{y \in \ree : \dist(y,P_x) \le \eta r\}$.
\end{theorem}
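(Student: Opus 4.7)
The final statement is the $n$-linear approximability theorem of \cite{M}, enlarged from the hypothesis $H^n(E)<\infty$ to the setting of locally finite Hausdorff measure. Since both conclusions \eqref{e4} and \eqref{e5} only involve small balls $B(x,r)$ around a fixed $x$, my plan is to reduce to the finite-measure case by a localization/exhaustion argument and then invoke \cite[Theorem 15.11]{M} as a black box. So the entire nontrivial content would be imported from Mattila, and only the globalization has to be carried out here.

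Concretely, for each $j\in\N$ I would set $E_j:=E\cap \overline{B(0,j+1)}$. By the local finiteness of $H^n|_E$, $H^n(E_j)<\infty$, and as a subset of the $n$-rectifiable set $E$, the set $E_j$ is itself $n$-rectifiable (a subset of a countable union of Lipschitz images of $\rn$ is again of that form). Applying \cite[Theorem 15.11]{M} to each $E_j$ would produce an exceptional set $E_{0,j}\subset E_j$ with $H^n(E_{0,j})=0$ such that every $x\in E_j\setminus E_{0,j}$ admits, for each $\eta>0$, parameters $r_x=r_x(\eta,j)$ and $\lambda_x=\lambda_x(\eta,j)$ and an $n$-plane $P_x=P_x(\eta,j)$ satisfying \eqref{e4}--\eqref{e5} with $E_j$ in place of $E$. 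The set $E_0:=\bigcup_j E_{0,j}$ has $H^n(E_0)=0$ by $\sigma$-subadditivity.

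To pass from $E_j$ back to $E$, given $x\in E\setminus E_0$ I would pick the smallest $j\ge 1$ with $x\in B(0,j)$, so that $\dist(x,\partial B(0,j+1))\ge 1$, and then replace $r_x(\eta,j)$ by $\min\{r_x(\eta,j),1/2\}$. For every $0<r<r_x$ we then have $B(x,r)\subset B(0,j+1)$, which gives $E_j\cap B(x,r)=E\cap B(x,r)$; similarly $B(y,\eta r)\subset B(0,j+1)$ for every $y\in P_x\cap B(x,r)$, so $E_j\cap B(y,\eta r)=E\cap B(y,\eta r)$. Hence \eqref{e4}--\eqref{e5} transfer verbatim from $E_j$ to $E$ at $x$, which is what we want.

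The substantive mathematics lies entirely inside the finite-measure statement in \cite{M}, whose proof combines two classical facts about $n$-rectifiable sets: the $H^n$-a.e.\ existence of an approximate tangent $n$-plane (responsible for \eqref{e5}) and the equality $\Theta^{*,n}(H^n|_E,x)=1$ for the upper density at $H^n$-a.e. $x$. The hard part there is turning the vanishing of $r^{-n}H^n\big((E\cap B(x,r))\setminus P_x^{(\eta r)}\big)$ into a uniform \emph{lower} bound on the $E$-mass of sub-balls $B(y,\eta r)$ with $y\in P_x\cap B(x,r)$, which requires a Vitali/Besicovitch-type covering argument on the tangent plane. In the present context that work is treated as cited; only the localization above has to be supplied on top, and I expect no obstacle beyond arranging the scales $r_x$ so that the sub-balls $B(y,\eta r)$ remain inside $B(0,j+1)$.
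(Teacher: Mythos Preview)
Your proposal is correct and matches the paper's treatment: the paper does not prove this theorem but simply remarks that \cite[Theorem 15.11]{M} is stated for $H^n(E)<\infty$ and ``immediately extends'' to locally finite $H^n$-measure because $n$-linear approximability is a local property. Your exhaustion by $E_j=E\cap\overline{B(0,j+1)}$ and transfer of \eqref{e4}--\eqref{e5} via a shrinking of $r_x$ is exactly the detailed execution of that remark; the only cosmetic point is that for large $\eta$ one should take $r_x\le 1/(2(1+\eta))$ rather than $1/2$ to keep $B(y,\eta r)\subset B(0,j+1)$, which you already anticipate.
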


 \begin{theorem}[Dahlberg's Theorem, \cite{D1}]\label{L3}
 Suppose $\om$ is a bounded Lipschitz domain with surface measure $\sigma:= H^n\mres{\pom}$ then 
the harmonic measure associated to $\om$, $\hm$, is in $A_\infty(d\sigma)$. In particular, harmonic measure and surface measure are mutually absolutely continuous.
 \end{theorem}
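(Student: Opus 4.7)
The plan is to follow Dahlberg's original strategy: reduce to a graph Lipschitz domain, apply the Rellich identity to harmonic functions, and deduce a reverse H\"{o}lder inequality for the Poisson kernel, which is equivalent to the $A_\infty$ condition.

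First I would localize. Since $\partial\Omega$ is compact and locally the graph of a Lipschitz function, a partition of unity reduces matters to working in a coordinate cylinder $C$ in which $\partial\Omega\cap C$ is the graph $\{x_{n+1}=\phi(x')\}$ of a Lipschitz function $\phi$ with $\|\nabla\phi\|_\infty\le L$, and $\Omega\cap C$ lies above it. The key analytic tool is the Rellich identity: for every constant vector field $\alpha\in\mathbb{R}^{n+1}$ and every sufficiently smooth harmonic $u$ with adequate decay,
\begin{equation*}
\int_{\partial\Omega}(\alpha\cdot\nu)\,|\nabla u|^2\,d\sigma \;=\; 2\int_{\partial\Omega}(\alpha\cdot\nabla u)(\nu\cdot\nabla u)\,d\sigma.
\end{equation*}
Choosing $\alpha=e_{n+1}$, so that $\alpha\cdot\nu\ge (1+L^2)^{-1/2}>0$, and decomposing $\nabla u$ into tangential and normal components, this yields the two-sided comparability
\begin{equation*}
\int_{\partial\Omega}|\nabla_{t}u|^2\,d\sigma \;\sim\; \int_{\partial\Omega}|\partial_\nu u|^2\,d\sigma.
\end{equation*}

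Next, I would apply this to $u(X)=\omega^X(F)$ for Borel $F\subset\Delta:=B(x_0,r)\cap\partial\Omega$, together with Green function representations and standard non-tangential maximal function bounds. Using the doubling property of $\omega$ (which holds on Lipschitz, and more generally NTA, domains) and the change-of-pole formula, this conversion leads to the reverse H\"{o}lder inequality
\begin{equation*}
\left(\frac{1}{\sigma(\Delta)}\int_{\Delta}(k^{X_\Delta})^{2}\,d\sigma\right)^{1/2} \;\lesssim\; \frac{1}{\sigma(\Delta)}\int_{\Delta}k^{X_\Delta}\,d\sigma,
\end{equation*}
where $k^{X_\Delta}=d\omega^{X_\Delta}/d\sigma$ and $X_\Delta$ is a corkscrew point for $\Delta$. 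Since an $L^2$ reverse H\"{o}lder bound for the Poisson kernel is well known to be equivalent to $\omega\in A_\infty(d\sigma)$ (via Gehring's self-improvement, or directly), the conclusion follows, and mutual absolute continuity is then immediate.

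The main obstacle is rigorously implementing the Rellich identity on a merely Lipschitz (not $C^1$) boundary. The standard remedy is to approximate $\Omega$ from inside by smooth domains $\Omega_j$ with uniformly bounded Lipschitz constants, apply the identity on each $\Omega_j$, and pass to the limit while keeping uniform control of the non-tangential maximal function, the square function, and the boundary traces. Ensuring these uniform estimates, together with the $L^2$ solvability of the Dirichlet problem needed to interpret $\partial_\nu u$ non-tangentially, is the technical heart of the argument and the reason why the original proof in \cite{D1} is nontrivial.
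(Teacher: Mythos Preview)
The paper does not prove this statement at all: Theorem~\ref{L3} is stated in the Preliminaries section as a known result, with a citation to Dahlberg's original paper \cite{D1}, and is then invoked as a black box in the proofs of Theorems~\ref{T1} and~\ref{T3}. So there is no ``paper's own proof'' to compare against.

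Your sketch is a reasonable outline of one of the standard routes to Dahlberg's theorem, though strictly speaking it is closer to the Jerison--Kenig approach via Rellich identities than to Dahlberg's original 1977 argument, which instead proceeded through a direct comparison of harmonic measures and a stopping-time construction. Either route is acceptable if fully carried out; the technical issues you flag (approximation by smooth domains, uniform nontangential control) are genuine and nontrivial. For the purposes of this paper, however, no proof is expected---you should simply cite \cite{D1} and move on.
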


\section{Proofs of the main Theorems}

We shall require two auxiliary lemmas.  As mentioned in the introduction, our arguments here
are similar in spirit to the proof of McMillan's Theorem as given in
\cite{GM}. 

\begin{lemma}[\bf Existence of Truncated Cones]\label{L2} Let $E \subset \ree$ be a $n$-rectifiable set with locally finite surface measure, write $\sigma:= H^n \mres{E}$ and use the notation in Theorem \ref{L1}. Given $x \in E\setminus E_0$ assume that there exists $\rho_x,c_x > 0$ such that 
\begin{equation}
\sigma(B(y,r)\cap E) \ge c_x\,r^n, 
\qquad 
\forall\,y \in B(x,\rho) \cap E, \ 0<r \le \rho_x.
\label{eq:LADR-aux}
\end{equation} 
For every $0<\eta<\eta_0(c_x):=\min\{2^{-4\,n}, c_x^2\}$, 
there exists a two sided truncated cone with vertex at $x$, height $h(\eta):=\eta^{\frac{1}{4n}}\,\min\{r_x(\eta), \rho_x\}$ and aperture $\alpha(\eta):=2\,\arctan\big(\eta^{-\frac{1}{4n}}/2)>\pi/2$ which does not meet $E$. (Note that $\alpha(\eta) \to \pi$ as $\eta \to 0^+$.)
\end{lemma}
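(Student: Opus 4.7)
The plan is to align coordinates so that the approximating $n$-plane $P_x=P_x(\eta)$ from Theorem~\ref{L1} becomes $\{y_{n+1}=0\}$ and to take the two-sided cone $\Gamma:=\Gamma^+_{h,\alpha}(x)\cup\Gamma^-_{h,\alpha}(x)$ with axis $\pm e_{n+1}$. I will argue by contradiction: if $z\in\Gamma\cap E$ (WLOG in $\Gamma^+$, so $z_{n+1}>0$), I will exhibit a ball $B(z,R)$ contained in the ``bad set'' $B(x,r)\setminus P_x^{(\eta r)}$ at a well-chosen scale $r$, and then the local lower ADR bound~\eqref{eq:LADR-aux} applied at $z\in E$ will clash with the $n$-linear approximability bound~\eqref{e5} applied at scale $r$.

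The first step is a pair of geometric consequences of the cone definition. For $z\in\Gamma$, the condition $|z'|<|z_{n+1}|\tan(\alpha/2)=|z_{n+1}|\eta^{-1/(4n)}/2$, combined with $\eta<2^{-4n}$ (so that $\eta^{-1/(2n)}/4\geq 1$), gives
$$|z|^2 \leq |z_{n+1}|^2\Bigl(1+\tfrac14\eta^{-1/(2n)}\Bigr)\leq \frac{|z_{n+1}|^2}{2\,\eta^{1/(2n)}}.$$
Writing $\rho:=\min\{r_x(\eta),\rho_x\}$ and using $|z_{n+1}|\leq h=\eta^{1/(4n)}\rho$, this yields both $|z_{n+1}|/|z|\geq\sqrt{2}\,\eta^{1/(4n)}$ and $|z|\leq\rho/\sqrt{2}$; in particular $\Gamma\subset B(x,\rho)$, so the natural working scale $r\leq\rho$ is admissible in both~\eqref{e5} and~\eqref{eq:LADR-aux}.

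Given $z\in\Gamma^+\cap E$ I will set $r:=\sqrt{2}\,|z|$ and $R:=\min\bigl\{(\sqrt{2}-1)|z|,\ z_{n+1}-\sqrt{2}\,\eta\,|z|\bigr\}$. The first term in the minimum forces $B(z,R)\subset B(x,r)$ and the second forces $B(z,R)\cap P_x^{(\eta r)}=\emptyset$; the bound $|z|\leq\rho/\sqrt{2}$ yields $r\leq\rho\leq r_x$, and $R,|z|<\rho_x$ is automatic. Using the cone bound on $z_{n+1}/|z|$ together with the elementary inequality $\eta\leq\eta^{1/(4n)}/2$ (another consequence of $\eta<2^{-4n}$, $n\geq 1$), a short verification in either case of the minimum gives $R\geq |z|\eta^{1/(4n)}/\sqrt{2}$. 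Chaining the lower ADR bound with~\eqref{e5} then produces
$$c_x\Bigl(\frac{|z|\eta^{1/(4n)}}{\sqrt{2}}\Bigr)^n\leq c_x R^n\leq \sigma\bigl(B(z,R)\cap E\bigr)<\eta r^n=2^{n/2}\eta\,|z|^n,$$
which simplifies to $c_x\leq 2^n\eta^{3/4}$. A short case split on whether $c_x\leq 2^{-2n}$ shows that the hypothesis $\eta<\eta_0(c_x)=\min\{2^{-4n},c_x^2\}$ actually forces $\eta<c_x^{4/3}/2^{4n/3}$, i.e.\ $c_x>2^n\eta^{3/4}$, which is the desired contradiction; the symmetric case $z\in\Gamma^-$ is handled identically by reflecting $y_{n+1}\mapsto-y_{n+1}$. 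The main technical difficulty is the tight constant bookkeeping in the choice of $r$ and $R$: the choice $r=\sqrt{2}|z|$ (rather than $r=2|z|$) is exactly what keeps $r\leq\rho$, and $R$ is precisely the largest ball radius for which $B(z,R)$ stays in the ``bad set''; with these choices the stated threshold $\eta_0(c_x)$ falls out.
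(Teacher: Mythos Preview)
Your proof is correct and follows the same contradiction strategy as the paper: assume $z\in\Gamma\cap E$, locate a ball $B(z,R)$ inside the ``bad set'' $B(x,r)\setminus P_x^{(\eta r)}$, and pit the local lower ADR bound \eqref{eq:LADR-aux} against the approximability estimate \eqref{e5}. The only difference is parametrization: the paper sets $r:=z_{n+1}\,\eta^{-1/(4n)}$ (so the scale is determined by the \emph{height} of $z$) and uses the single ball radius $\eta^{1/(2n)}r$, which yields directly $c_x\eta^{1/2}>\eta$, i.e.\ $\eta<c_x^2$, with no case split; your choice $r=\sqrt{2}\,|z|$ is equally valid but forces the extra bookkeeping you carry out to recover the stated threshold.
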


We would like to call the reader's attention to the following fact. It is well-known that rectifiability is not affected by adding/removing sets with null $H^n$-measure. Hence we could augment $E$ by adding a countable dense set in $\ree$ and the resulting set will meet any truncated cone. This does not contradict the conclusion  of Lemma \ref{L2} since  \eqref{eq:LADR-aux} will not hold for the new set as it requires to have a lower ADR condition for all small balls near $x$ with the same constant $c_x.$ 

\begin{proof}
Without loss of generality we may take $x = 0$ and $P_x = \mathbb{R}^n \times \{0\}$. Given $0<\eta<\eta_0(c_x)$ we are going to see that $\Gamma_{h(\eta),\alpha(\eta)}(0)\subset\ree\setminus E$ with $h(\eta)$ and $\alpha(\eta)$ are in the statement.  Notice that $\alpha(\eta)> \pi/2$ from the choice of $\eta$ and also that $\alpha(\eta)\to \pi$ as $\eta \to 0^+$. A similar argument shows existence of similar truncated cone in the direction of $-e_{n+1}$.
Suppose (for the sake of contradiction) that there exist $r \in (0, h(\eta))$ and  $z \in D_r \cap E$ where
\begin{equation}
D_r = \left\{ z = (z', z_{n+1}) :\ z_{n+1} = \eta^{\frac{1}{4n}}r,\ |z'| <\frac{r}{2}\right\}.
\end{equation}
Then since $\eta_0\le 2^{-4n}$ it follows that 
\begin{equation}\label{e6}
E \cap B(z, \eta^{\frac{1}{2n}}r) \subset (E \cap B(0,r)) \setminus P_x^{(\eta r)}.
\end{equation}
On the other hand, the fact that $\eta_0\le c_{x}^2$ yields    
\begin{equation}\label{e7}
\sigma(E \cap B(z, \eta^{\frac{1}{2n}}r)) \ge c_{x}\eta^{\frac{1}{2}}r^n > \eta r^n,
\end{equation}
which together with \eqref{e6} contradicts \eqref{e5}. 
\end{proof}

\begin{lemma}[\bf Existence of Interior Truncated Cones]\label{R1}
Let $\om$ be an open set and whose $n$-rectifiable boundary, $\pom$, has locally finite surface measure $H^n|_{\pom}$. Assume that $x \in \pom\setminus(\pom)_*$ (recall the notation in Definition \ref{d2}) satisfies the hypothesis of Lemma \ref{L2} with $E = \pom$, $\rho_x$ and $c_x$ as above. Given $\epsilon > 0$ there exists $\tilde{\eta}_0 = \tilde{\eta}_0(\epsilon)<\eta_0(c_x)$  such that if $0<\eta < \tilde{\eta}_0$ and 
\begin{equation}\label{e101}
\limsup_{r \to 0^{+}} \frac{|B(x,r) \cap \om|}{|B(x,r)|} > \epsilon
\end{equation}
then one of the cones constructed in Lemma \ref{L2} must be in the interior of $\om$.
\end{lemma}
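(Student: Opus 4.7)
The plan is to combine Lemma \ref{L2} with a routine volume-ratio estimate and a connectedness dichotomy. By Lemma \ref{L2}, for every $0<\eta<\eta_0(c_x)$ the point $x$ admits two opposing truncated cones $\Gamma^+_\eta:=\Gamma^+_{h(\eta),\alpha(\eta)}(x)$ and $\Gamma^-_\eta:=\Gamma^-_{h(\eta),\alpha(\eta)}(x)$, both disjoint from $\pom$. Since each cone is open and connected while the open set $\ree\setminus\pom$ is the disjoint union $\om\sqcup(\ree\setminus\overline{\om})$, each of $\Gamma^\pm_\eta$ must lie entirely in $\om$ or entirely in $\ree\setminus\overline{\om}$. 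It therefore suffices to show that, provided $\eta$ is sufficiently small depending on $\epsilon$, both cones cannot simultaneously lie in $\ree\setminus\overline{\om}$.

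The key geometric input is a volume estimate for the ``bicone'' complement of the two cones. After translating $x$ to the origin, for $0<r\le h(\eta)$ the set $B(x,r)\setminus(\Gamma^+_\eta\cup\Gamma^-_\eta)$ is $\{y:|y_{n+1}|\le|y'|\tan\beta(\eta)\}\cap B(0,r)$, where $\beta(\eta):=\pi/2-\alpha(\eta)/2$. From the explicit formula $\tan(\alpha(\eta)/2)=\eta^{-1/(4n)}/2$ given in Lemma \ref{L2}, one obtains $\tan\beta(\eta)=2\eta^{1/(4n)}$, hence $\beta(\eta)\le C\,\eta^{1/(4n)}$ for small $\eta$. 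A standard spherical-coordinates computation (the solid angle subtended by the bicone being proportional to $\beta(\eta)$) then produces a dimensional constant $C_n$ such that
\begin{equation*}
\frac{|B(x,r)\setminus(\Gamma^+_\eta\cup\Gamma^-_\eta)|}{|B(x,r)|}\,\le\,C_n\,\eta^{1/(4n)},\qquad 0<r\le h(\eta).
\end{equation*}

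With this estimate in hand, I would take $\tilde{\eta}_0(\epsilon):=\min\{\eta_0(c_x),(\epsilon/C_n)^{4n}\}$, so that $C_n\,\eta^{1/(4n)}<\epsilon$ whenever $0<\eta<\tilde{\eta}_0(\epsilon)$. If both $\Gamma^\pm_\eta$ were to lie in $\ree\setminus\overline{\om}$, then $B(x,r)\cap\om\subset B(x,r)\setminus(\Gamma^+_\eta\cup\Gamma^-_\eta)$ for every $r\le h(\eta)$, and the previous display would force $|B(x,r)\cap\om|/|B(x,r)|<\epsilon$ for all such $r$; taking $\limsup$ as $r\to 0^+$ then contradicts \eqref{e101}. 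Hence at least one of the two cones constructed in Lemma \ref{L2} must be contained in $\om$, as claimed. The only mildly technical step is the bicone volume bound; the rest is a clean application of connectedness together with the quantitative description of $\alpha(\eta)$ already provided by Lemma \ref{L2}.
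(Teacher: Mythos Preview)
Your argument is correct and follows essentially the same route as the paper's proof: bound the relative volume of the bicone complement $B(x,r)\setminus(\Gamma^+\cup\Gamma^-)$ for $r\le h(\eta)$, use the density hypothesis \eqref{e101} to force one of the cones to meet $\om$, and then invoke connectedness of the cones (together with the fact that they miss $\pom$) to conclude that one cone lies inside $\om$. The only difference is cosmetic: where the paper uses a rescaling observation and the qualitative fact that $\alpha(\eta)\to\pi$ to choose $\tilde{\eta}_0$, you compute the explicit rate $C_n\,\eta^{1/(4n)}$ and give a closed-form $\tilde{\eta}_0(\epsilon)$.
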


\begin{proof}
We may assume again that $x = 0$ and $P_x = \mathbb{R}^n \times \{0\}$ and  let $0<\eta< \tilde{\eta}_0 < \eta_0(c_x)$, where $\tilde{\eta}_0$ is to be chosen momentarily.
If $0<r < h(\eta)$ then by a rescaling argument
\begin{equation}\label{e102}
\frac{|B(x,r)\setminus (\Gamma^+_{h(\eta),\alpha(\eta)} \cup \Gamma^-_{h(\eta),\alpha(\eta)})|}{|B(x,r)|} 
= 
\frac{|B(x,h(\eta))\setminus (\Gamma^+_{h(\eta),\alpha(\eta)} \cup \Gamma^-_{h(\eta),\alpha(\eta)})|}{|B(x,h(\eta))|}
\end{equation}
and since $\alpha(\eta) \to \pi$ as $\eta \to 0^+$ one sees that
\begin{equation}\label{e103}
\frac{|B(x,h(\eta))\setminus (\Gamma^+_{h(\eta),\alpha(\eta)} \cup \Gamma^-_{h(\eta),\alpha(\eta)})|}{|B(x,r_0(\eta))|} \downarrow 0
\end{equation} as $\eta \to 0^+$. Choosing $\tilde{\eta}_0$ sufficiently small (depending on $\epsilon$) we have that 
\begin{equation}\label{e104}
\frac{|B(x,h(\eta))\setminus (\Gamma^+_{h(\eta),\alpha(\eta)} \cup \Gamma^-_{h(\eta),\alpha(\eta)})|}{|B(x,h(\eta))|} < \epsilon/2
\end{equation}
for any fixed $0<\eta<\tilde{\eta}_0$. On the other hand by \eqref{e101} there exists $0<r < h(\eta)$ such that 
\begin{equation}\label{e201}
\frac{|B(x,r) \cap \om|}{|B(x,r)|} > \epsilon.
\end{equation}
It follows from \eqref{e102}, \eqref{e104} and \eqref{e201} that at least one of the cones must meet $\om$. Recall that neither of the cones meet $E=\pom$, hence one of the cones must be interior.
\end{proof}

\begin{proof}[Proof of Theorem \ref{T2}] 
We show that $E$ being $n$-rectifiable implies \eqref{eq3} (the other implication is trivial).
Choose $\{\nu_m\}_{m=1}^{M}\subset \mathbb{S}^n$ (the unit sphere in $\ree$) such that for every $\nu \in \mathbb{S}^n$ there exists  $\nu_m$, $1\le m\le M$, such that $\ang(\nu,\nu_m) < \pi/8$. Set $P_m := \nu_m^\perp$, $1\le m\le M$.

Let us recall the definition of $E_*$ in \eqref{eq:local-lower-ADR} and note that for every $x\in E_*$ there exists $c_x, \rho_x>0$ such that
$$
\sigma(B(y,r)\cap E) \ge c_{x}\,r^n,
\qquad
\forall\, y\in B(x,\rho_x) \cap E, \ 0<r \le \rho_x.
$$
We use Theorem \ref{L1} and its notation. For every $k\in \N$ an $1\le m\le M$ we set
\begin{equation}\label{e8}
G(k, m):= \big\{x \in E_*\setminus E_0: \max\{c_x, \rho_x,r_x\} > 2^{-k}, \ \ang(P_m, P_x) < \pi/8 \big\}.
\end{equation}
Notice that setting $Z=(E\setminus E_*)\cup E_0$ we have that $\sigma(Z)=0$. Also, 
\begin{equation}
E=Z\cup \Big(\bigcup_{m=1}^M\bigcup_{k\in \N} G(k, m)\Big).
\label{eq:E-cover-Z}
\end{equation}
Hence, \eqref{eq3} follows at once if we show that each $G(k, m)$ can be covered by a countable union of boundaries of bounded Lipschitz domains missing $E$. 

Fix then $k\in \N$ and $1\le m\le M$ and we work with $G=G(k,m)$. By rotation, we may assume without loss of generality that $P_m = e_{n+1}^\perp$. Write $\eta_k:=\eta_0(2^{-k})$ (see Lemma \ref{L2}) and note that by Lemma \ref{L2} and the definition of $G_k$ it follows that if $0<\eta<\eta_k$ then for every $x\in G$ the  cone with vertex at $x$, axis $e_{n+1}$ (in the direction of $e_{n+1}$), aperture $\alpha(\eta)/2$ aperture and height $h(\eta)/2$ misses $E$.  At this stage we fix $0<\eta<\eta_k$ 1 and write $\Gamma_{h_0}= \Gamma_{h_0, \alpha_0}$ where $h_0=h(\eta)/2$ and $\alpha_0=\alpha(\eta)/2$.  What we have obtained so far is that $\Gamma_{h_0}(x)\subset \ree\setminus E$ for every $x\in G$. Now define the ``slices'', $S_{\ell}$, for $\ell \in \ZZ$, as follows 
\begin{equation}\label{e9}
S_\ell 
:= 
\left\{X \in \ree : X_{n+1} \in \left[\ell\,\frac{h_0}{10}, (\ell + 1)\frac{h_0}{10}\right) \right\}.
\end{equation}
Set $F_{\ell} := G \cap S_\ell$. Let $\pi_0$ be the projection of $\ree$ onto $\rn$ defined by $\pi_0(x) = \pi_0(x',x_{n+1}) = x'$. Now let $p_j \in \N$ be chosen so that the diameter of a $n$-dimensional cube of sidelength $2^{-p_j}$ is less than $\tfrac{h_0}{8}\tan(\alpha_0/2)$ and let $\dd_{p_j}$ be the collection of closed $n$-dimensional dyadic cubes with sidelength $2^{-p_j}$.
\begin{claim}\label{c1}
For every $Q \in \dd_{p_j}$ such that $\pi_0^{-1}(Q)\cap F_\ell \neq \emptyset$,
\begin{equation}\label{e105}
\om_{Q,\ell} 
:= 
\bigcup_{x \in \pi^{-1}(Q)\cap F_\ell} \Gamma_{h_0} (x)
\cap 
\left\{(z',z_{n+1}) \in \ree: z_{n+1} < (\ell +1) \frac{h_0}{10} + \frac{h_0}{2} \right\}
\end{equation}
is a bounded star-shaped domain with respect to a ball and hence a bounded Lipschitz domain.
\end{claim}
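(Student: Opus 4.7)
\textbf{Proof plan for Claim \ref{c1}.} The plan is to exhibit an explicit ball $B$, high above the cube $Q$ but still inside the slab, that lies in every truncated cone $\Gamma_{h_0}(x)$ with vertex $x \in \pi_0^{-1}(Q)\cap F_\ell$; then star-shapedness of $\omega_{Q,\ell}$ with respect to $B$ will follow from convexity of each individual cone and of the capping half-space. Concretely, I will take $y^\ast := (x_Q,\, (\ell+1)\tfrac{h_0}{10} + \tfrac{h_0}{4})$, where $x_Q$ is the center of the cube $Q$, and show that for a suitably small but fixed $r^\ast>0$ the ball $B:=B(y^\ast,r^\ast)$ enjoys $B\subset \Gamma_{h_0}(x)\cap\{z_{n+1}<(\ell+1)\tfrac{h_0}{10}+\tfrac{h_0}{2}\}$ for every admissible vertex $x$.

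First I will verify that $y^\ast \in \Gamma_{h_0}(x)$ for each $x\in \pi_0^{-1}(Q)\cap F_\ell$. By definition of $F_\ell$ one has $x_{n+1}\in [\ell\tfrac{h_0}{10},(\ell+1)\tfrac{h_0}{10})$, so
\[
y^\ast_{n+1}-x_{n+1} \,\in\, \bigl(\tfrac{h_0}{4},\,\tfrac{h_0}{4}+\tfrac{h_0}{10}\bigr] \,\subset\, (0,h_0),
\]
while the choice of $p_j$ (so that $\diam(Q)<\tfrac{h_0}{8}\tan(\alpha_0/2)$) gives
\[
|x_Q-x'| \,\le\, \diam(Q) \,<\, \tfrac{h_0}{8}\tan(\alpha_0/2) \,<\, (y^\ast_{n+1}-x_{n+1})\tan(\alpha_0/2).
\]
Thus $y^\ast$ lies strictly inside $\Gamma_{h_0}(x)$, and moreover $y^\ast_{n+1}=(\ell+1)\tfrac{h_0}{10}+\tfrac{h_0}{4}<(\ell+1)\tfrac{h_0}{10}+\tfrac{h_0}{2}$, so $y^\ast$ is in the slab. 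Since these margins are uniform in $x$, one can pick $r^\ast>0$ (depending only on $h_0$, $\alpha_0$ and $\diam(Q)$) such that $B(y^\ast,r^\ast)\subset \Gamma_{h_0}(x)$ for every $x\in \pi_0^{-1}(Q)\cap F_\ell$ and $B(y^\ast,r^\ast)$ remains in the open half-space $\{z_{n+1}<(\ell+1)\tfrac{h_0}{10}+\tfrac{h_0}{2}\}$.

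Next I will establish star-shapedness. Let $y\in\omega_{Q,\ell}$ and $b\in B$. By definition there exists some $x\in \pi_0^{-1}(Q)\cap F_\ell$ with $y\in\Gamma_{h_0}(x)$ and $y_{n+1}<(\ell+1)\tfrac{h_0}{10}+\tfrac{h_0}{2}$. The truncated cone $\Gamma_{h_0}(x)$ is convex (as the intersection of the infinite convex cone $\{|z'-x'|<(z_{n+1}-x_{n+1})\tan(\alpha_0/2)\}$ with the horizontal slab $\{x_{n+1}<z_{n+1}<x_{n+1}+h_0\}$), and the capping half-space is convex as well. Since $b\in B\subset \Gamma_{h_0}(x)\cap\{z_{n+1}<(\ell+1)\tfrac{h_0}{10}+\tfrac{h_0}{2}\}$, convexity yields $[b,y]\subset \Gamma_{h_0}(x)\cap\{z_{n+1}<(\ell+1)\tfrac{h_0}{10}+\tfrac{h_0}{2}\}\subset \omega_{Q,\ell}$. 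Openness of $\omega_{Q,\ell}$ is immediate (union of open sets intersected with an open half-space), and boundedness is clear since the set of vertices lies in the bounded region $Q\times[\ell\tfrac{h_0}{10},(\ell+1)\tfrac{h_0}{10})$ and each cone has finite height. The fact that a bounded open set which is star-shaped with respect to a ball is a Lipschitz domain is standard, so the claim follows.

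The main obstacle is really only bookkeeping: one has to arrange that the common ``apex'' $y^\ast$ sits uniformly deep inside every cone $\Gamma_{h_0}(x)$, which is exactly what the calibration $\diam(Q)<\tfrac{h_0}{8}\tan(\alpha_0/2)$ built into the choice of $p_j$ is designed to ensure, and that the chosen height puts $y^\ast$ neither too close to the cap nor above any cone's top. Once these elementary inequalities are in place, everything else is convexity.
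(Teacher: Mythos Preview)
Your proof is correct and follows essentially the same approach as the paper's: you choose the same center point $y^\ast=(x_Q,(\ell+1)\tfrac{h_0}{10}+\tfrac{h_0}{4})$ (which, after the paper's reduction to $\ell=-1$, is exactly its $Y_Q=(y_Q,h_0/4)$), verify it lies uniformly deep inside every capped cone, and deduce star-shapedness from convexity. The only cosmetic difference is that the paper first passes to the smaller auxiliary cone $\Gamma_{h_0/2}(x',0)\subset\Gamma_{\rm max}(x)$ to make the uniform radius $R$ explicit, whereas you argue the uniform margins directly; both are equally valid.
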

\begin{proof}[Proof of Claim \ref{c1}]
Without loss of generality we may assume $\ell = -1$. Let $y_Q$ be the center of $Q$ and set $Y_Q = (y_Q,\tfrac{h_0}{4})$. Take an arbitrary $x=(x', x_{n+1}) \in \pi_0^{-1}(Q)\cap F_\ell$. 
Since 
\begin{equation}\label{e106}
\Gamma_{\rm max} (x) := \Gamma_{h_0}(x) \cap \left\{(z',z_{n+1}) \in \ree: z_{n+1} < \frac{h_0}{2} \right\}
\end{equation} 
is convex, it suffices to show that $B(Y_Q,R) \subset \Gamma_{\rm max}(x)$ for some $R$ independent of $x$.  Note that  $ \Gamma_{\frac{h_0}{2}}(x',0) \subset \Gamma_{\rm max}(x)$, so we instead show that $B(Y_Q,R) \subset \Gamma_{\frac{h_0}{2}}(x',0)$ for some $R$ independent of $x'$. Recall that 
\begin{equation}\label{e107}
\Gamma_{\frac{h_0}{2}}(x',0) 
= 
\big\{ (z', z_{n+1}) : \ |z' - x'| < z_{n+1} \tan(\alpha_0/2), \ z_{n+1} \in (0,\tfrac{h_0}{2}) \big\}
\end{equation}
so that $K_{x'}=\{ (z', \tfrac{h_0}{4}) : |z' - x'| \le \tfrac{h_0}{8}\tan(\alpha_0/2)\}$ 
is a compact subset of $\Gamma_{\frac{h_0}{2}}(x',0)$. Set
\begin{equation}\label{e109}
R:= \frac{1}{2}\dist\big(K_{x'}, \partial \Gamma_{\frac{h_0}{2}}(x',0)\big)>0
\end{equation}
and
notice that $R$ has no dependence on $x'$. Also, by choice of $p_j$, we have that $|y_Q - x'| \le \tfrac{h_0}{8}\tan(\alpha_0/2)$.  Hence $Y_Q\in K_{x'}$ and $B(Y_Q,R) \subset \Gamma_{\frac{h_0}{2}}(x',0)  \subset \Gamma_{\rm max}(x)$ as desired. For a proof that bounded star-shaped domains with respect to a ball are bounded Lipschitz domains see \cite[Section 1.1.8]{Maz}.
\end{proof}

Once the claim is proved we observe that by construction, $\pi^{-1}(Q) \cap F_{\ell} \subset \pom_{Q,\ell}$, to see this we need only to observe that if $z_1, z_2 \in F_{\ell}$ then $z_2 \notin \Gamma_{h_0}(z_1)$, since $\Gamma_{h_0}(z_1)$ does not meet $E$. Then we have that 
\begin{equation}\label{G-Lip}
G = \bigcup_\ell F_\ell \subset \bigcup_\ell\bigcup_{Q \in \dd_{p_j}}\pom_{Q,\ell}
\end{equation}
where we take $\om_{Q, \ell} = \emptyset$ if $\pi^{-1}(Q)\cap F_\ell = \emptyset$. This completes the proof.
\end{proof}

\begin{proof}[Proof of Theorem \ref{T1}]
Let $\om_j$ be as in the statement of Theorem \ref{T2} and $F \subset E$  be such that $\sigma(F) > 0$. Then there exists $\om_j$ such that $\sigma(F \cap \pom_j) > 0$. Pick $X \in \om_j\subset\ree\setminus E$, let $\hm^X_{\om_j}$ be the harmonic measure for $\om_j$ with pole at $X$ and $\hm^X$ be the harmonic measure for $\ree\setminus E$ with pole at $X$. By the maximum principle and Dahlberg's Theorem (Theorem \ref{L3}) it follows that 
\begin{equation}\label{e12}
\hm^X(F) \ge \hm^X_{\om_j}(F \cap \pom_j) > 0,
\end{equation}
and the proof is complete.
\end{proof}

\begin{proof}[Proof of Theorem \ref{T3}]
We first show that $\pom$ being $n$-rectifiable implies \eqref{eq3:int} (the converse is trivial).
For every $x\in \pom$ we set
$$
\tau_x:=\limsup_{r \to 0^+} \frac{|B(x,r) \cap \om|}{|B(x,r)|}
$$
and recall that $\partial_+\Omega=\{x\in \pom:\ \tau_x>0\}$ and, by hypothesis, $\sigma(\pom\setminus \partial_+\Omega)=0$.

We follow the proof of Theorem \ref{T2} with $E=\pom$ with the following modifications. The set $G(k,m)$ is now defined as 
$$
G(k, m):= \big\{x \in (\pom)_*\setminus (\pom)_0: \max\{c_x, \rho_x,r_x, \tau_x\} > 2^{-k}, \ \ang(P_m, P_x) < \pi/8 \big\}
$$
so that \eqref{eq:E-cover-Z} holds where now 
$Z=\big(\pom\setminus ((\pom)_*\cap\partial_+\Omega)\big ) \cup (\pom)_0$ which again satisfies $\sigma(Z)=0$ (recall that $(\pom)_*$ and $(\pom)_0$ given respectively in Definition \ref{d2} and Theorem \ref{L1}). 
Again we just need to work with some fixed $G(k,m)$. Now we pick $\eta_k=\tilde{\eta}_0(2^{-k})$ (see Lemma \ref{R1}) and take $0<\eta<\eta_k$.  Next, we construct the domains as in the proof of Theorem \ref{T2}. Recall that in that construction we used the cones $\Gamma_{h_0}$ in the direction $e_{n+1}$. To emphasize this, let us write the cones as $\Gamma_{h_0}^+$ and also in \eqref{e105} we put $\om_{Q,\ell}^+$ in place of $\om_{Q,\ell}$. We know already all these are bounded Lipschitz domains.  We then may repeat the construction using the cones $\Gamma_{h_0}^-$ (in the direction $-e_{n+1}$) with the appropriate change  in \eqref{e106} and obtain that $\om_{Q,\ell}^-$ is another bounded Lipschitz domain. Note that by Lemma \ref{R1}, for every $x\in G(k,m)$, we have $\tau_x>2^{-k}$ and then either $\Gamma^+_{h_0}(x)$ or $\Gamma^-_{h_0}(x)$ is contained in $\Omega$. This implies that, if $\pi^{-1}(Q)\cap F_\ell \neq \emptyset$,  either $\om_{Q,\ell}^+$ or $\om_{Q,\ell}^-
 $ is contained in $\Omega$ (recall that both $\om_{Q,\ell}^+$ and $\om_{Q,\ell}^-$ connected domains that do not meet $\pom$) and we write $\om_{Q,\ell}^{\rm int}$ for the one that is contained in $\Omega$ (if both have this property we just pick one). As before, $\pi^{-1}(Q) \cap F_{\ell} \subset \pom_{Q,\ell}^{\rm int}$, to see this we observe that if $z_1, z_2 \in F_{\ell}$ then $z_2 \notin \Gamma_{h_0}^{\pm}(z_1)$ since $\Gamma_{c_2}^{\pm}(z_1)$ does not meet $E=\pom$. Then much as before
\begin{equation}\label{G-Lip-int}
G = \bigcup_\ell F_\ell \subset \bigcup_\ell\bigcup_{Q \in \dd_{p_j}}\pom_{Q,\ell}^{\rm int}
\end{equation}
where we take $\om_{Q, \ell}^{\rm int} = \emptyset$ if $\pi^{-1}(Q)\cap F_\ell = \emptyset$. This shows that $\pom$ can be covered by the boundaries of interior bounded Lipschitz domains, the proof that $\sigma \ll \hm$ is just as in the proof of Theorem \ref{T1}.
\end{proof}

\begin{remark}
From the proof of Theorem \ref{T2} one can see that given a closed set $E\subset \ree$  if we write $\E$ for the subset of $E$ containing all ``cone points'' ($x\in E$ is a cone point if there is a truncated open cone $\Gamma$ with vertex
at $x$, such that $\Gamma \subset \ree\setminus E$) then one has $\E\subset \cup_j \pom_j$ where 
the $\om_j$'s are bounded Lipschitz subdomains of $\ree \setminus E$. Hence, for any $F\subset \E$ with $H^n(F)>0$ there is $X\in\ree\setminus E$ for which $\omega^X(F)>0$. Note that the hypotheses in Theorem \ref{T1}, \ref{T2}, with the help of Lemma  \ref{L2},  guarantee that $\E$ has full $H^n$-measure on $E$. Analogously in the context of Theorem \ref{T3}, if we take the set of ``interior cone points'' (i.e., cone points whose associated cone is contained in $\Omega$) we can cover it by boundaries of bounded Lipschitz subdomains
contained in $\Omega$ and we get the corresponding absolute continuity. Again the hypotheses of Theorem \ref{T3} yield, after using Lemma \ref{R1}, that the  ``interior cone points'' have full $H^n$-measure on $\pom$.
\end{remark}

\section{Counterexamples}\label{sec:counter}

In this section we produce examples of domains with rectifiable boundaries for which surface measure fails to be absolutely continuous with respect to harmonic measure. The first example is a domain that does not have locally finite perimeter and the second one,  based on a construction presented by Jonas Azzam in January 2015 at ICMAT (Spain),  fails to satisfy the WLADR condition. These examples show that in Theorem \ref{T3} we cannot drop any of our background hypotheses. As we will observe below the same constructions allow us to obtain that in Theorem \ref{T3} we cannot also drop any of our background hypotheses.

 Let us point out that the assumption $\pom$ being $n$-rectifiable is also necessary by Theorem \ref{T5}. 
 We further note that in \cite[Section 4]{ABHM} there is an  example in $\re^3$ of a domain which is 1-sided NTA (in particular it is connected and  the Interior Measure Theoretic Boundary has full $H^n$-measure) and its boundary is ADR (hence it has locally finite $H^n$-measure and the WLADR condition holds). The boundary is not rectifiable (it is a cylindrical version of the ``4-corner Cantor set'' of J. Garnett) and surface measure is not absolutely continuous with respect to harmonic measure.  

In what follows, for a given domain $\Omega$ we will use the notation $\hm_\Omega^X$ for the harmonic measure for $\Omega$ with pole at $X\in\Omega$. In both examples we make use of the maximum principle, that is, if $X \in \om' \subset \om$ and $F \subset \pom$ then
\begin{equation}\label{e15}
\hm_{\om'}^X(F \cap \pom') \le \hm_{\om}^X(F).
\end{equation}

\begin{example}\label{ex:1}
 For $k \ge 1 $, and $n\ge 1$, set 
$$
\Sigma_k=\big\{(x,t)\in\reu:\ t = 2^{-k}, |x| \ge 2^{-k}\big\}
$$
and define
$$
\Omega:= \reu\setminus \left(\cup_{k=1}^\infty\Sigma_k \right),\qquad \Omega_k:= 
\reu\setminus \Sigma_k , \qquad \Omega_k':=\re^n\times(2^{-k},\infty) \,.$$
Then $\om$ is an open connected domain whose boundary clearly does not have locally finite $H^n$-measure (any surface ball centered at $\re^n\times\{0\}$ contains infinitely many $n$-dimensional balls of fixed radius). 
It is immediate to see that $\pom$ satisfies the WLADR condition as $(\pom)_*=\pom$ (recall the notation in Definition \ref{d2}). Notice also that $\Omega$ satisfies the interior Corkscrew condition (as the sets $\Sigma_k$ are located at heights which are separate enough) and hence $\partial_+\Omega=\pom$. Finally $\pom=(\re^n\times\{0\})\cup(\cup_{k=1}^\infty \Sigma_k$) which is $n$-rectifiable.

Take $X^*= (0, \dots, 0,2)\in\Omega$ and we are going show that $\hm^{X^*}_{\om}(F) = 0$ with $F=\re^n\times\{0\}\subset\pom$. Since $\om \subset \om_k$, \eqref{e15} implies that $\omega_\Omega^{X^*}(F)\le \omega_{\Omega_k}^{X^*}(F)$, hence we just need to see that $\omega_{\Omega_k}^{X^*}(F)\to 0$ as $k\to\infty$. Write $\Delta'_k = \{(x,t)\in\reu: t = 2^{-k}, |x| < 2^{-k}\}\subset \pom_k'$. Using the fact that harmonic measures for $\Omega_k$ and $\Omega_k'$ are probabilities, 
that $\Omega_k'\subset \Omega_k$ and maximum principle \eqref{e15} we see that
$$
\omega_{\Omega_k}^{X^*}(F)
=
1-\omega_{\Omega_k}^{X^*}(\partial\Omega_k\setminus F)
=
1-\omega_{\Omega_k}^{X^*}(\Sigma_k)
\le
1-\omega_{\Omega_k'}^{X^*}(\Sigma_k)
=
\omega_{\tilde{\Omega}_j}^{X^*}(\Delta_k').
$$
Since $\Omega_k'$ is a translation of $\reu$ we can use the classical Poisson kernel for the upper-half space $P(x,t)$ and one has that
\begin{equation}
\hm^{X^*}_{\om'_k}(\Delta'_k) = \int_{|y|<2^{-k}}P(y, 2-2^{-k})\,dy\to 0,
\qquad
\mbox{as }k\to\infty.
\end{equation}
This shows that surface measure  fails to be absolutely continuous with respect to harmonic measure $\omega_{\Omega}^{X^*}$ and hence with respect to $\omega_{\Omega}^{X}$ for every $X\in \Omega$ since $\Omega$ is connected.

To summarize, we have constructed $\Omega$, an open connected set, satisfying all the conditions in Theorem \ref{T3} with the exception that $\pom$ has locally finite $H^n$-measure, and  for which the conclusion of Theorem \ref{T3} fails.
\end{example}

\begin{remark}\label{remark:reflection}
If we repeat the same construction of Example \ref{ex:1} in the lower half-space and let $E$ be the boundary of the resulting open set (which has now 2 connected components), then  clearly $E$ satisfies all the hypotheses in Theorem \ref{T1}, except for $E$ having locally finite $H^n$-measure. In this case we can analogously prove that for the same set $F$ as before $\omega^X(F)=0$ for every $X\in\ree\setminus E$, hence the conclusion of Theorem \ref{T1} does not hold.
\end{remark}

\begin{example}  For $k\geq 1$, and $n\geq 2$, set
$$\Sigma_k:= \{(x,t)\in \reu:\, t=2^{-k},\, x\in \overline{\Delta(0,2^{-k}c_k)}+ c_k \ZZ^n\} \,,$$
where $c_k\downarrow 0$ will be chosen, and for $x\in \rn$, $\Delta(x,r):= \{y\in\rn:\, |x-y|<r\}$ is the 
usual $n$-disk of radius $r$ centered at $x$.   Define
$$\Omega:= \reu\setminus \left(\cup_{k=1}^\infty\Sigma_k \right),\qquad \Omega_k:= 
\reu\setminus \Sigma_k  \,,$$
which is clearly open and connected. Notice that $\Omega$ satisfies the interior
Corkscrew condition (note that the sets $\Sigma_k$ are located at heights which are separate enough),
hence $\partial_+\Omega=\pom$.

We assume that $c_k$ decays rapidly enough. It is easy to see that $\pom$ satisfies the upper ADR condition. Also, the WLADR (and hence the lower ADR) fails. To see this, given $X=(x,0)\in \pom$, we can find a 
sequence of balls $B_k=B(X_k, 2^{-k-2})$ with $X_k=(c_k\,\vec{l}_{k,x}, 2^{-k})\in \pom$ such that $X_k\to X$  with $\vec{l}_{k,x}\in\ZZ^n$. But then, for $k$ large enough $H^n(B_k\cap \pom)/(2^{-k-2})^n\approx c_k \to 0$ as $k\to \infty$. Hence 
$\re^n\times\{0\} \subset \pom\setminus (\pom)_*$  (recall the notation in Definition \ref{d2}) and the WLADR condition fails. 

\begin{remark}
Note that the argument that we have just used suggests possible relaxed version of the WLADR condition. To elaborate on this, first one can easily see that in the context of \eqref{eq:local-lower-ADR} one can alternatively write
\begin{align*}
E_{*}
&=
\Big\{ 
x\in E: \lim_{\rho\to 0^+} \inf_{
\substack{y\in B(x,\rho)\cap E
\\
0<r<\rho
}} r^{-n}\,H^n(B(y,r)\cap E)>0 
\Big\}
\\
&=
\Big\{ 
x\in E: \liminf_{B\to \{x\}} r_B^{-n}\,H^n(B\cap E)>0 
\Big\}
,
\end{align*}
where the $\liminf$ is taken over the balls $B=B(x_B,r_B),\, x_B\in E$, with $x_B\to x$ and $r_B\to 0$.
Consider the set $E_{**}$, where we replace above the $\inf$ with $\sup$ or, equivalently, $\liminf$ with $\limsup$.  Note that $E_*\subset E_{**}$. One might wonder whether the WLADR  condition can be replaced by the weaker fact $H^n(E\setminus E_{**})=0$.
In fact, this is not possible: in the current example with $E=\pom$, one can easily see that $E_{**}=E$. More generally, 
$H^n(E\setminus E_{**})=0$ for all $E \subset \ree$ $n$-rectifiable with $H^n \mres E$ being locally finite.  This follows from \cite[Theorem 16.2]{M}, which states that if  $E\subset \ree$ is $n$-rectifiable and $H^n \mres E$ is locally finite then the $n$-density
$$
\Theta^n(E,x) = \lim_{r \to 0^+}(2r)^{-n}H^n(B(x,r)\cap E)
$$
exists and is equal to $1$ for $H^n$ almost every $x \in E$.
(Note that \cite[Theorem 16.2]{M} is stated for  $H^n(E)<\infty$ but this may be easily  replaced by 
the condition that $H^n \mres E$ is locally finite.)

\end{remark}

Let $\hm^{(\cdot)}:= \hm^{(\cdot)}_\om$ and $\hm_k^{(\cdot)}:= \hm^{(\cdot)}_{\om_k}$ denote
harmonic measure for the domains $\om$ and $\om_k$ respectively.

\smallskip

\noindent{\bf Claim}.  If $c_k$ decays fast enough, then $\hm^{(\cdot)}(F) = 0$, with $F=\re^n\times\{0\}$.

Assuming this momentarily we have defined $\Omega$, an open connected set, satisfying all the conditions in Theorem \ref{T3} with the exception of the WLADR property, and  for which the conclusion of Theorem \ref{T3} fails. Again, as in Remark \ref{remark:reflection}, we may obtain a counterexample for Theorem \ref{T1} that satisfies all its hypotheses but the WLADR condition.

\smallskip

Before proving our claim we need to recall some definitions. Given $O\subset \ree$ an open and $K$  a compact subset of $O$ we define the capacity of  $K$ relative to $O$ as
$$
\mbox{cap}(K,O)=\inf\left\{\iint_O|\nabla\phi|^2\, dY:\ \phi\in C_0^\infty(O),
\ 
\phi\geq 1\mbox{ in }K\right\}.
$$
Also, the inhomogeneous capacity of $K$ is defined as 
$$
\mbox{Cap} (K)=\inf\left\{\iint_{\ree} \big(|\phi|^2+|\nabla\phi|^2\big)\, dY:\ \phi\in C_0^\infty(\re),
\ 
\phi\geq 1\mbox{ in }K\right\}.
$$
Combining \cite[Theorem 2.38]{HKM}, \cite[Theorem
 2.2.7]{AH} and \cite[Theorem 4.5.2] {AH} we have that if $K$ is a compact subset of $\overline{B}$, where $B$ is a ball with radius smaller than $1$, then
\begin{equation}\label{cap-dual}
\mbox{cap}(K,2B)
\gtrsim
\mbox{Cap}(K)
\gtrsim
\sup_{\mu} \frac{\mu(K)^2}{\|W(\mu)\|_{L^{1}(\mu)}},
\end{equation}
where the implicit constants depend only on $n$,
the sup runs over all Radon positive measures supported on $K$; and
$$
W(\mu)(X):=\int_0^1 \frac{\mu(B(X,t))}{t^{n-1}}\,\frac{dt}{t},
\qquad
X\in\supp \mu.
$$

\smallskip

We are now ready to prove our claim. We fix $k\geq 2$, take $c_k= 2^{-2kn}$ and write $N=N_k:=c_k^{-1}>1$.  
We are going to show that 
\begin{equation}\label{eq4.capbound}
\cp\big(\overline{B(X_0,s)}\cap\Sigma_k, B(X_0,2s)\big) \gtrsim s^{n-1}\,,\quad X_0:=(x_0,2^{-k})\in\Sigma_k,\ N^{-1/2}\le s<1\,.
\end{equation}
For a fixed $X_0$ and $s$, write $K=\overline{B(X_0,s)}\cap\Sigma_k$ and set $\mu = 2^{kn} s^{-1} \, H^n|_K\,,$
and note that for $X\in K$
\begin{equation}\label{eq4.mubound}
\mu\big(B(X,r)\big)\approx 2^{kn} s^{-1} \left\{
\begin{array}{l}
r^n\,, \,\,\qquad\,\, r<2^{-k} N^{-1}\,,
\\[4pt]
2^{-kn}N^{-n} \,,\,\, \,\,2^{-k} N^{-1}\leq r\leq N^{-1}
\\[4pt]
2^{-kn}r^n \,,\qquad  N^{-1}< r\leq s\\[4pt]
2^{-kn}s^n \,,\qquad r> s
\,.
\end{array}
\right.
\end{equation}
To compute $W(\mu)(X)$ for $X\in K$ write
$$W(\mu)(X) = \int_0^{2^{-k}N^{-1}}  +\, \int_{2^{-k}N^{-1}}^{N^{-1}}
+ \, \int_{N^{-1}}^s +\, \int_s^1\, =: \,I + II+III+IV\,.$$
Then, since $s\geq N^{-1/2}$,
$$I + II \lesssim 2^{kn} s^{-1} \left(2^{-k}N^{-1} \, + 2^{-kn}N^{-n}\int_{2^{-k}N^{-1}}^\infty\frac{dr}{r^n} \right)
\lesssim\, 2^{k(n-1)} N^{-1/2} \lesssim 1\,,$$
where the last bound holds by our choice of $N$ and $c_k$.
Furthermore, the last two estimates in \eqref{eq4.mubound} easily imply that $III+IV\lesssim 1$ and hence $W(\mu)(X)\lesssim 1$ for every $X\in K$. This, \eqref{cap-dual}, and \eqref{eq4.mubound} imply
as desired \eqref{eq4.capbound}:
$$
\cp\big(\overline{B(X_0,s)}\cap\Sigma_k, B(X_0,2s)\big) \gtrsim \mu(K)
\gtrsim
s^{n-1}.
$$

Set 
$$
P_k:= \left\{\left(x,2^{-k}-N^{-1/2}\right)\in\reu:\, x\in\rn\right\}\,,
$$
and observe that for $X\in P_k$,
$$
N^{-1/2}\le \delta_k(X):= \dist(X,\pom_k) = \dist(X,\Sigma_k)\le 2\, N^{-1/2}\,.$$
We now define
$$
u(X):= \hm_k^X(F) \,,\qquad X\in \om_k\,,
$$
and observe that $u\in W^{1,2}(\Omega_k)\cap C(\overline{\Omega_k})$ since $\pom_k$ is ADR (constants depend on $k$ but we just use this qualitatively) and $1_F$ is a Lipschitz function on $\pom_k$. Fix $Z_0\in P_k$ and let $Z_0'\in \Sigma_k$ be such that $|Z_0-Z_0'|=\dist(Z_0, \pom_k)\le 2\,N^{-1/2}$. Let $\Omega_{Z_0}=
\Omega_k\cap B(Z_0', \frac34 2^{-k})$, which is an open connected bounded set. 
We can now apply \cite[Example 2.12, Theorem 6.18]{HKM} to obtain $\alpha=\alpha(n)>0$  such that 
$$
u(Z_0) 
\lesssim 
\exp\left(-\alpha \int_{3\,N^{-1/2}}^{2^{-k-2}}  \frac{ds}{s} \right)  
\approx \big(2^k N^{-1/2}\big)^\alpha =2^{-\alpha k(n-1)}.
$$
where we have used since $u\equiv 0$ on $\pom_k \cap B(Z_0',2^{-k-1})$ and 
\eqref{eq4.capbound}.
Note that the last estimate holds for any $Z_0\in P_k$ and therefore, by the maximum principle,
$$u(x,t) \lesssim 2^{-\alpha k(n-1)}\,, \qquad (x,t) \in \om_k\,, \,\, t > 2^{-k}-N^{-1/2}\,.$$
In particular, if we set $X_0:= (0,\dots,0,1) \in \reu$, then by another application
of the maximum principle,
$$\hm^{X_0}(F)\, \leq \, \hm_k^{X_0}(F)=
u(X_0) \, \lesssim 2^{-\alpha k(n-1)}\, \to 0\,,$$
as $k \to \infty$, and the claim is established.
\end{example}

\appendix

\section{Local Theorems}\label{appendix}

We would like to point out that our method allows to obtain a local version of Theorem \ref{T3} (an analogous result can be proved for Theorem \ref{T1}, the precise statement is left to the interested reader). Let us recall that in Definition \ref{d2} we introduced the set $(\pom)_*$ (cf. \eqref{eq:local-lower-ADR} with $E=\pom$) formed by the points where the WLADR condition holds. Also, we remind the reader that $\partial_+\Omega$ denotes the  Interior Measure Theoretic Boundary as defined in \eqref{e100} in Definition \ref{d3}.

\begin{theorem}\label{THRM1}
Let $\om \subset \ree$, $n\ge 1$, be an open connected set whose boundary $\pom$ has locally finite $H^n$-measure. 
Suppose that $F \subset \pom$ is $H^n$-measurable, $n$-rectifiable and $H^n(F \setminus (\pom)_*) = H^n(F \setminus \partial_+\Omega) = 0$. Then
\begin{equation}
H^n \mres F \ll \hm \mres F \ll \hm.
\end{equation}
\end{theorem}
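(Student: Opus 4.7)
The strategy adapts the proof of Theorem \ref{T3} to the local setting. We will produce a countable collection $\{\om_j^{\rm int}\}$ of bounded Lipschitz subdomains of $\om$ together with an $H^n$-null set $Z \subset F$, such that $F \subset Z \cup \bigcup_j \pom_j^{\rm int}$. Once this cover is in place, the first absolute continuity $H^n\mres F \ll \hm\mres F$ is derived exactly as in the proof of Theorem \ref{T1}: given a Borel set $F'\subset F$ with $H^n(F')>0$, choose $j$ so that $H^n(F' \cap \pom_j^{\rm int}) > 0$; Dahlberg's Theorem \ref{L3} applied to $\om_j^{\rm int}$ combined with the maximum principle transfers the positivity to $\hm^X$ for any $X \in \om$, using the connectedness of $\om$. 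The second inclusion $\hm\mres F \ll \hm$ is immediate from the definition of restriction.

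\textbf{Central technical adjustment.} The WLADR hypothesis is placed on $\pom$ via $(\pom)_*$, whereas Theorem \ref{L1} ($n$-linear approximation) applies only to the rectifiable set $F$. Since the proof of Lemma \ref{L2} requires the approximating set and the set on which WLADR is assumed to coincide, we bridge the gap by the elementary density fact that for $\sigma$-a.e.\ $x \in F$,
\begin{equation*}
\lim_{r\to 0^+} r^{-n}\, \sigma(B(x,r)\setminus F) = 0, \qquad \sigma:=H^n\mres\pom.
\end{equation*}
This follows from the Besicovitch differentiation theorem applied to the Radon measure $\sigma$, which yields $\sigma(B(x,r)\cap F)/\sigma(B(x,r)) \to 1$ at $\sigma$-a.e.\ $x\in F$, together with the standard bound $\limsup_{r\to 0^+} r^{-n}\sigma(B(x,r)) \lesssim 1$ valid $\sigma$-a.e.\ on any set of locally finite $H^n$-measure (see, e.g., \cite[Theorem 6.2]{M}). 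An Egorov-type restriction lets us assume this convergence is uniform on countably many $\sigma\mres F$-full-measure pieces, which together with the local uniformity of the WLADR constants of $\sigma$ on the open set $(\pom)_*$ is all we need.

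\textbf{Interior cones at $\sigma\mres F$-a.e.\ point.} With the density bridge in hand we revisit Lemma \ref{L2} applied with the rectifiable set $F$ in place of $E$: Theorem \ref{L1} provides the approximating plane $P_x$ and the estimate \eqref{e5} for $F$, while, for a hypothetical $z\in\pom$ lying in the two-sided cone centred at $x$, WLADR of $\sigma$ near $x$ would force $\sigma(B(z,\eta^{\frac{1}{2n}} r))\ge c_x\, \eta^{\frac{1}{2}} r^n$; subtracting the error $\sigma(B(z,\eta^{\frac{1}{2n}} r)\setminus F)=o(r^n)$ from the density bridge yields $H^n(F\cap B(z,\eta^{\frac{1}{2n}} r))>\eta r^n$, contradicting \eqref{e5} applied to $F$ once $\eta$ is small enough. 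Thus the two-sided cone misses all of $\pom$, and Lemma \ref{R1}, combined with $\tau_x>0$ (available at $\sigma\mres F$-a.e.\ $x$ by $H^n(F\setminus\partial_+\om)=0$), upgrades one of the two sides to an interior cone contained in $\om$. The stratification $F\subset Z\cup \bigcup_{k,m}G(k,m)$ obtained by discretising the constants $c_x,\rho_x,r_x,\tau_x$ and the orientation of $P_x$, and the slice-and-dyadic-cube assembly of bounded star-shaped subdomains $\om^{\rm int}_{Q,\ell}\subset \om$, then proceed verbatim as in the proof of Theorem \ref{T3}.

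\textbf{Main obstacle.} The crux of the argument is the density bridge described above. It is what allows the proof of Lemma \ref{L2}---which in its original form essentially requires the rectifiable set and the WLADR set to coincide---to deliver cones missing the entirety of $\pom$ in the present setting, where $\pom$ itself need not be $n$-rectifiable but its WLADR constants can still be transferred to $F$ on a set of full $H^n$-measure. Once this is established, every remaining step is a localised replay of the argument for Theorem \ref{T3}.
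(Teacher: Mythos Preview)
Your proposal is correct and follows essentially the same approach as the paper. The paper also reduces to the proof of Theorem~\ref{T3}, and the key technical adjustment is exactly your ``density bridge'': the fact that $H^n\big((\pom\setminus F)\cap B(x,r)\big)=o(r^n)$ for $H^n$-a.e.\ $x\in F$ (this is \cite[Theorem 6.2]{M}, just as you cite). The only difference is packaging. The paper isolates this density fact into a standalone lemma (Lemma~\ref{LMA1}) which, combined with Theorem~\ref{L1} applied to $F$, yields the flatness estimate \eqref{e5} directly for all of $E=\pom$ at $H^n$-a.e.\ point of $F$; after that, Lemmas~\ref{L2} and~\ref{R1} apply verbatim. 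You instead keep \eqref{e5} for $F$ and insert the density correction inside the contradiction step of Lemma~\ref{L2}. These are equivalent rearrangements of the same argument. Your Egorov step is unnecessary: as in the paper, the non-uniformity of the various constants (including the rate in the density estimate) is absorbed into the stratification $G(k,m)$ by enlarging the list of parameters whose minimum is required to exceed $2^{-k}$.
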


Additionally, by combining Theorem \ref{THRM1} with the results of \cite{AHM3TV}, we are able to decompose $\pom$ as a rectifiable portion, where surface measure is absolutely continuous with respect to harmonic measure, and a purely $n$-unrectifiable set with vanishing harmonic measure. 

\begin{theorem}\label{THRM2}
Let $\om \subset \ree$, $n\ge 1$, be an open connected set whose boundary $\pom$ has locally finite $H^n$-measure. 
Assume that $\pom$ satisfies the WLADR condition and that the Interior Measure Theoretic Boundary has full $H^n$-measure (i.e., $H^n(\pom \setminus (\pom)_*) = H^n(\pom \setminus \partial_+\Omega) = 0$). Then, there exists an $n$-rectifiable set $F \subset \pom$ such that $\sigma \mres F \ll \hm$, $\pom \setminus F$ is purely $n$-unrectifiable and $\hm(\pom \setminus F) = 0$. 
 \end{theorem}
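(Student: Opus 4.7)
The plan is to assemble the rectifiable and purely unrectifiable pieces of $\pom$ separately, applying Theorem \ref{THRM1} to the former and invoking the main result of \cite{AHM3TV} on the latter. Since $\pom$ has locally finite (hence $\sigma$-finite) $H^n$-measure, the classical Federer-type decomposition theorem (see e.g.\ \cite[Theorem 15.6]{M}) produces a splitting
\[
\pom = F \sqcup N,
\]
with $F$ an $H^n$-measurable $n$-rectifiable set and $N := \pom\setminus F$ purely $n$-unrectifiable. This $F$ will be the set required in the conclusion.

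First I would establish $\sigma\mres F \ll \hm$ by a direct appeal to Theorem \ref{THRM1}. The global hypotheses $H^n(\pom\setminus (\pom)_*)=0$ and $H^n(\pom\setminus \partial_+\Omega)=0$ restrict to $F$ to give $H^n(F\setminus (\pom)_*) = H^n(F\setminus \partial_+\Omega) = 0$, so all hypotheses of Theorem \ref{THRM1} are satisfied for the $H^n$-measurable, $n$-rectifiable set $F$, yielding $H^n\mres F \ll \hm\mres F \ll \hm$.

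Finally, to obtain $\hm(N)=0$, I would decompose $N = \bigcup_{k\ge 1} N_k$ with $N_k := N \cap \overline{B(0,k)}$, so that each $N_k$ is a Borel set with $H^n(N_k)<\infty$ (by local finiteness of surface measure) which remains purely $n$-unrectifiable as a subset of $N$. The main theorem of \cite{AHM3TV}—stating that on the boundary of any open connected set in $\ree$, every Borel subset of finite $H^n$-measure that is purely $n$-unrectifiable has vanishing harmonic measure—then applies to each $N_k$, and countable additivity gives $\hm(N) = \lim_k \hm(N_k) = 0$. The substance of the argument really resides in the deep result of \cite{AHM3TV}; the only work on our side is the clean bookkeeping of checking that the hypotheses of Theorem \ref{THRM1} are inherited by $F$ and that purely $n$-unrectifiability is stable under taking Borel subsets, both of which are immediate.
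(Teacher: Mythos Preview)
There is a genuine gap in the final step. The statement you attribute to \cite{AHM3TV}—that every purely $n$-unrectifiable Borel subset of $\pom$ with finite $H^n$-measure has vanishing harmonic measure—is not what \cite{AHM3TV} proves, and in fact it is false even under the hypotheses of the present theorem. Take the domain described in \cite[Section~4]{ABHM}: a $1$-sided NTA domain in $\RR^3$ whose boundary is ADR and purely $1$-unrectifiable (a cylinder over the $4$-corner Cantor set). This $\Omega$ satisfies WLADR and $\partial_+\Omega=\pom$, so all background hypotheses of Theorem~\ref{THRM2} hold; yet $\pom$ itself is purely unrectifiable with $H^n(\pom\cap B)<\infty$ for every ball $B$, while $\hm(\pom)=1$. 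In this example your Federer decomposition gives (up to $H^n$-null sets) $F=\emptyset$ and $N=\pom$, so $\hm(N)=1$ and your argument breaks down. What \cite{AHM3TV} actually supplies (Theorem~1.1(b) as cited in the paper) is the implication ``$H^n|_E\ll\hm$ on a set $E$ of locally finite $H^n$-measure $\Longrightarrow$ $E$ is $n$-rectifiable''; the absolute continuity hypothesis is essential and cannot be dropped.

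The paper's proof runs in the opposite order from yours: instead of starting from the rectifiable/purely-unrectifiable splitting and trying to deduce the measure-theoretic properties, it starts from the Lebesgue decomposition of $\sigma$ with respect to $\hm$. This produces a Borel set $F$ with $\sigma|_F\ll\hm$ and $\hm(\pom\setminus F)=0$ \emph{for free}. Then \cite[Theorem~1.1(b)]{AHM3TV} applies directly (the hypothesis $\sigma|_F\ll\hm$ is exactly what it needs) to conclude that $F$ is $n$-rectifiable; and pure unrectifiability of $\pom\setminus F$ follows by contradiction from Theorem~\ref{THRM1}, since a rectifiable piece of $\pom\setminus F$ with positive $H^n$-measure would, by Theorem~\ref{THRM1}, have positive $\hm$-measure, contradicting $\hm(\pom\setminus F)=0$. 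Note that the Lebesgue $F$ and your Federer $F$ agree up to an $H^n$-null set, but not up to an $\hm$-null set—this is precisely why your choice of $F$ need not satisfy $\hm(\pom\setminus F)=0$.
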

 
To prove Theorem \ref{THRM2} (assuming Theorem \ref{THRM1}) note that, by the Lebesgue Decomposition Theorem (cf. \cite[page 42]{EG}), there exists a Borel set $F\subset \pom$, such that 
 \begin{equation}
 \sigma = \sigma_{\rm ac} + \sigma_{\rm s} = \sigma \mres F + \sigma \mres{\pom\setminus F}
 \end{equation}
with $\sigma_{\rm ac} \ll \hm$ and $\hm(\pom \setminus F) = 0$. By \cite[Theorem 1.1 (b)]{AHM3TV}, $F$ is rectifiable. It remains to show that $\pom \setminus F$ is purely $n$-unrectifiable. For the sake of a contradiction, suppose that $F'$ is a Borel $n$-rectifiable set such that $H^n(F' \cap (\pom\setminus F)) > 0$. Then, by Theorem \ref{THRM1} applied to the rectifiable set $F' \cap (\pom\setminus F)$, it follows that  $\hm(F' \cap (\pom \setminus F)) > 0$ which contradicts the fact that $\hm (\pom \setminus F ) = 0$.

\medskip

To prove Theorem \ref{THRM1} one can follow the argument in the proof of Theorem \ref{T3} with the following changes. First, it suffices to see that $F$ can be covered $H^n$-a.e. by a countable union of boundaries of Lipschitz domains contained in $\Omega$. To that end, we need to modify Theorem \ref{L1}, since we are only assuming that a piece of $\pom$ is $n$-rectifiable. We would like to emphasize that \eqref{e4} in Theorem \ref{L1} was never used in the arguments (the WLADR condition is a somehow stronger version of it) and hence we only need a version of \eqref{e5}. 

\begin{lemma}\label{LMA1} 
Suppose that $E \subset \ree$ is $H^n$-measurable with $H^n \mres E$ locally finite.
Let $F \subset E$ be an $H^n$-measurable and $n$-rectifiable set. Then there exists $F_0 \subset F$ with $H^n (F_0) = 0$ such that for every $x \in F\setminus F_0$ the following holds: 
for every $\eta > 0$ there exist positive numbers $r_x=r_x(\eta)$ and a $n$-dimensional affine subspace $P_x=P_x(\eta)$ such that for all $0 < r < r_x$
\begin{equation}\label{star}
H^n\big((E \cap B(x,r))\setminus P_x^{(\eta r)}\big) < \eta r^n.
\end{equation}
\end{lemma}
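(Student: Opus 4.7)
The plan is to reduce the statement to two standard ingredients applied to a clean decomposition of $E$ at points of $F$. Theorem \ref{L1} requires $n$-rectifiability of the ambient set, so I would apply it not to $E$ itself (which need not be rectifiable) but to the subset $F$, thereby controlling the ``rectifiable part'' $(F\cap B(x,r))\setminus P_x^{(\eta r)}$. A classical density-zero theorem applied to the leftover set $E\setminus F$ at $H^n$-a.e.~point of $F$ will then control the ``irregular part'' $(E\setminus F)\cap B(x,r)$. Summing the two pieces produces \eqref{star}.

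First I would apply Theorem \ref{L1} with $F$ playing the role of $E$---legitimate since $F$ is $H^n$-measurable, $n$-rectifiable, and $H^n\mres F$ is locally finite (as $F\subset E$). This produces an $H^n$-null set $F_1\subset F$ such that for every $x\in F\setminus F_1$ and every $\eta>0$ there exist a scale $r_x^{(1)}(\eta)>0$ and an $n$-dimensional affine subspace $P_x(\eta)$ with $H^n\bigl((F\cap B(x,r))\setminus P_x(\eta)^{(\eta r)}\bigr)<\eta r^n$ for $0<r<r_x^{(1)}(\eta)$.

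Next I would invoke the classical density-zero fact (see, e.g., Mattila, \emph{Geometry of Sets and Measures in Euclidean Spaces}, Theorem 6.2): for any $H^n$-measurable set $A$ with $H^n(A)<\infty$, the upper $n$-density of $H^n\mres A$ vanishes at $H^n$-a.e.~$x\in\ree\setminus A$. Applying this along a countable exhaustion of $\ree$ by closed balls to the sets $A_k:=(E\setminus F)\cap\overline{B(0,k)}$ (using that $F$ is disjoint from $E\setminus F$), and taking a union of the corresponding null sets, yields $F_2\subset F$ with $H^n(F_2)=0$ such that
\[
\lim_{r\to 0^+}\frac{H^n\bigl((E\setminus F)\cap B(x,r)\bigr)}{r^n}=0\qquad\text{for every }x\in F\setminus F_2.
\]
Crucially, this condition is $\eta$-free, so a single $F_2$ serves for all $\eta>0$.

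Finally, I would set $F_0:=F_1\cup F_2$, so $H^n(F_0)=0$, and for $x\in F\setminus F_0$ and $\eta>0$ take $P_x:=P_x(\eta/2)$ from Step 1 together with $r_x$ small enough that both the Step 1 bound at parameter $\eta/2$ and the Step 2 bound of $(\eta/2)r^n$ hold. The inclusion
\[
(E\cap B(x,r))\setminus P_x^{(\eta r)}\subset\bigl((F\cap B(x,r))\setminus P_x^{(\eta r/2)}\bigr)\cup\bigl((E\setminus F)\cap B(x,r)\bigr),
\]
valid since $P_x^{(\eta r/2)}\subset P_x^{(\eta r)}$, combined with subadditivity of $H^n$ then delivers \eqref{star}. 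The only mildly delicate point---which I flag as the principal obstacle, though it remains routine---is verifying the density-zero statement in Step 2 under \emph{local} rather than global finiteness of $H^n\mres E$; this is precisely what the countable exhaustion by balls addresses.
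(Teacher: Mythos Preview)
Your proof is correct and follows essentially the same approach as the paper's: apply Theorem~\ref{L1} to the rectifiable piece $F$, use the density-zero theorem (Mattila, Theorem~6.2) to control $(E\setminus F)\cap B(x,r)$ at $H^n$-a.e.\ point of $F$, and combine the two via subadditivity. Your explicit treatment of the local-finiteness issue via countable exhaustion is a point the paper leaves implicit, but otherwise the arguments coincide.
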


Assuming this result momentarily, one can easily establish versions of Lemmas \ref{L2} and \ref{R1} for $x\in F\setminus F_0$. With these in hand, we let $E=\pom$ and apply Lemma \ref{LMA1} to find $F_0$. Following the proof of Theorem \ref{T3}, the sets $G(k,m)$ need to be intersected with $F\setminus F_0$, and we also take 
$Z=\big(F\setminus ((\pom)_*\cap\pom_+)\big ) \cup F_0$. From this point the proof goes through 
\textit{mutatis mutandis}, details are left to the interested reader.

\begin{proof}[Proof of Lemma \ref{LMA1}]
Since $F$ is $H^n$-measurable and $n$-rectifiable with $H^n \mres F$ locally finite, we can apply Theorem \ref{L1} and find $F_0' \subset F$ with $H^n(F_0') = 0$ such that for every $x \in F\setminus F_0'$ the following holds:
for every $\eta > 0$ there exist positive numbers $r_1=r_1(x,\eta)$ and a $n$-dimensional affine subspace $P_x=P_x(\eta)$ such that for all $0 < r < r_1$
\begin{equation}\label{2-star}
H^n\big((F \cap B(x,r))\setminus P_x^{(\eta r)}\big)
\le
H^n\big((F \cap B(x,r))\setminus P_x^{(\eta r/2)}\big) < \eta r^n/2.
\end{equation}
Now by standard density estimates (see \cite[Theorem 6.2 (2)]{M}) we have that 
$$
H^n (F_0'')
:=
H^n \big(\big\{x\in F: 
\limsup_{r \to 0^+} H^n\big((E \setminus F) \cap B(x,r)\big)\,(2r)^{-n} >0\big\}\big)
=0.
$$
Hence, for every $x \in F\setminus F_0''$ the following holds: given $\eta > 0$ there exists $r_2 = r_2(x, \eta) > 0$ such that for all $0 < r < r_2$
\begin{equation}\label{4-star}
H^n\big((E \setminus F) \cap B(x,r)\big) < \eta r^n/2.
\end{equation}
Set $F_0=F_0'\cup F_0''$ which clearly satisfies $H^n(F_0) = 0$.  For every $x \in F\setminus F_0$, taking $P_x = P_x(\eta)$ as above, \eqref{2-star} and \eqref{4-star} give for every  $0 < r < r_x:=\min\{r_1,r_2\}$ 
\begin{multline*}
H^n\big((E \cap B(x,r))\setminus P_x^{(\eta r)}\big) \le H^n\big((F \cap B(x,r))\setminus P_x^{(\tilde{\eta} r)}\big)+ H^n\big((E \setminus F) \cap B(x,r)\big)< \eta r^n,
\end{multline*}
which proves \eqref{star}.
\end{proof}

\end{document}